\documentclass[reqno]{amsart}
\usepackage{amsmath}
\usepackage{amssymb}
\usepackage{amsthm,hyperref}
\usepackage[all]{xy}
\usepackage[utf8]{inputenc}
\usepackage{enumerate}

\renewcommand{\hom}{\operatorname{Hom}}
\newcommand{\tor}{\operatorname{Tor}}
\newcommand{\ext}{\operatorname{Ext}}
\newcommand{\spec}{\operatorname{Spec}}
\newcommand{\height}{\operatorname{ht}}
\newcommand{\pd}{\operatorname{pd}}
\newcommand{\fd}{\operatorname{fd}}

\newcommand{\gid}{\operatorname{Gid}}
\newcommand{\p}{\mathfrak{p}}
\newcommand{\q}{\mathfrak{q}}
\newcommand{\m}{\mathfrak{m}}
\newcommand{\bd}{\mathcal{B}_D}

\renewcommand{\subset}{\subseteq}

\newcommand{\catb}[2]{\mathcal{B}_{#1}(#2)}
\newcommand{\ol}{\overline}

\newcommand{\supp}{\operatorname{supp}}
\newcommand{\kp}[1][p]{\kappa(\mathfrak{#1})}
\newcommand{\tp}[1][p]{t(\mathfrak{#1})}
\newcommand{\Tor}[4][R]{\operatorname{Tor}^{#1}_{#2}(#3,#4)}
\newcommand{\Ext}[4][R]{\operatorname{Ext}_{#1}^{#2}(#3,#4)}	
\newcommand{\ici}{\mathcal{I}_C\mathcal{I}}
\newcommand{\bc}{\mathcal{B}_C}
\renewcommand{\oplus}{\bigoplus}

\newtheorem{theorem}{Theorem}[section]
\newtheorem{intro}{Theorem}

\newtheorem{lemma}[theorem]{Lemma}
\newtheorem{proposition}[theorem]{Proposition}
\newtheorem{corollary}[theorem]{Corollary}
\theoremstyle{definition}
\newtheorem{definition}[theorem]{Definition}
\newtheorem{example}[theorem]{Example}
\newtheorem{rmk}[theorem]{Remark}
\newtheorem{fact}[theorem]{Fact}
\newtheorem{notn}[theorem]{Notation}
\newtheorem{assumption}[theorem]{Assumption}
\newtheorem{question}[theorem]{Question}

\numberwithin{equation}{theorem}

\title[Gorenstein injective filtrations]{Gorenstein injective filtrations over Cohen-Macaulay rings with dualizing modules}
\author{Aaron J. Feickert}
\author{Sean Sather-Wagstaff}
\address{Dept.\ of Mathematics, NDSU Dept \#2750, PO Box 6050, Fargo ND 58108-6050 USA}
\email{aaron.feickert@gmail.com}
\email{sean.sather-wagstaff@ndsu.edu}
\urladdr{http://www.ndsu.edu/pubweb/\~{}ssatherw/}
\thanks{Sean Sather-Wagstaff was supported in part by a grant from the NSA}

\keywords{Bass classes, direct sum decompositions, Gorenstein injective modules, semidualizing modules, tensor products}
\subjclass[2010]{13C05, 13C12, 13C13, 13D07}

\begin{document}

\maketitle

\begin{abstract}
Over a noetherian ring, it is a classic result of Matlis that injective modules admit direct sum decompositions into injective hulls of quotients by prime ideals. We show that over a Cohen-Macaulay ring admitting a dualizing module, Gorenstein injective modules admit similar filtrations. We also investigate Tor-modules of Gorenstein injective modules  over such rings. This extends work of Enochs and Huang over Gorenstein rings. 

Furthermore, we give examples showing the following: (1) the class of Gorenstein injective $R$-modules need not be closed under tensor products, even when $R$ is local and artinian; (2) the class of Gorenstein injective $R$-modules need not be closed under torsion products, even when $R$ is a local, complete hypersurface; and (3) the filtrations given in our main theorem do not yield direct sum decompositions, even when $R$ is a local, complete hypersurface.
\end{abstract}

\section{Introduction}
Throughout this paper, let $R$ be a commutative noetherian ring with identity. 

\

In the classic paper~\cite{matlis}, Matlis shows that injective modules over noetherian rings have direct sum decompositions into injective hulls of the form $E_R(R/\p)$, where $\p$ is prime. Our goal is to prove a similar result for Gorenstein injective modules.
Recall that an $R$-module $G$ is {\it Gorenstein injective} (or \emph{G-injective}, for short) if it is the image of some map in a complete injective resolution; that is, if there exists an exact sequence 
of injective $R$-modules
$$E=\cdots \to E_1 \to E_0 \to E^0 \to E^1\to \cdots$$ such that $\hom_R(E,I)$ is exact for all injective $R$-modules $I$ and $G \cong \operatorname{Im}(E_0 \to E^0)$.

In~\cite{enochs}, Enochs and Huang make the first progress on the above-stated goal. They
prove that over a Gorenstein ring of finite Krull dimension, Gorenstein injective modules admit filtrations 
such that subsequent quotients decompose as direct sums indexed by prime ideals of fixed height. They use this result to show that the class of Gorenstein injective modules is closed under tensor products over such rings.

In this paper, we answer the question posed in~\cite[Remark 3.2]{enochs} and extend the results of Enochs and Huang to Cohen-Macaulay rings admitting a {\it dualizing module}; that is, a finitely-generated $R$-module $D$ of finite injective dimension such that the homothety map $\chi_D^R: R \to \hom_R(D,D)$ is an isomorphism and we have $\ext_R^i(D,D) = 0$ for all $i \geq 1$; see Fact~\ref{fact141223a}.
Specifically, we prove the following result in Theorem~\ref{20141010.4} below.

\begin{intro}
\label{20141114.1}
Let $R$ be a $d$-dimensional Cohen-Macaulay ring with a dualizing module $D$. If $G$ is a Gorenstein injective $R$-module, then $G$ has a filtration $$0 = G_{d+1} \subset G_d \subset \cdots \subset G_1 \subset G_0 = G$$ such that each 
submodule $G_k$ and each quotient 
$G_k/G_{k+1}\cong\oplus_{\height(\p)=k}G_{(\p)}$
is Gorenstein injective and 
each module 
$G_{(\p)}:=\Tor k{\hom_R(D,E_R(R/\p))}{G}$
is Gorenstein injective and 
satisfies $\tp$; see Definition~\ref{def:tp}.
Moreover, this filtration and the direct sum compositions of the factors are unique and functorial.
\end{intro}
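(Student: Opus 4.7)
I would prove this by transporting the classical Matlis decomposition through the Foxby equivalence furnished by $D$. Since $R$ is Cohen--Macaulay with dualizing module $D$, every Gorenstein injective $G$ belongs to $\catb{D}{R}$; hence $H := \hom_R(D,G)$ belongs to $\cata{D}{R}$ and $G \cong D \otimes_R H$. Applying $\hom_R(D,-)$ to an injective resolution $0 \to G \to E^0 \to E^1 \to \cdots$ produces an exact sequence computing $H$, and Matlis's decomposition of each $E^i$---together with the fact that $D$ is finitely generated, so $\hom_R(D,-)$ commutes with direct sums---identifies $\hom_R(D,E^i) \cong \bigoplus_\p T_\p^{(\mu_i(\p))}$ with $T_\p := \hom_R(D, E_R(R/\p))$.

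The filtration would then be built by transporting the natural height-filtration on each $E^i$ (retaining only the summands with $\height(\p) \geq k$) through $\hom_R(D,-)$ to obtain a corresponding filtration $H = H_0 \supseteq H_1 \supseteq \cdots \supseteq H_d \supseteq H_{d+1} = 0$ whose graded pieces $H_k/H_{k+1}$ are direct sums $\bigoplus T_\p^{(I_\p)}$ indexed by primes of height exactly $k$. Because $\cata{D}{R}$ is closed under direct sums and satisfies the two-out-of-three property for short exact sequences, each $H_k$ lies in $\cata{D}{R}$, so applying $D \otimes_R -$ is exact along these sequences and produces the desired filtration $G_k := D \otimes_R H_k$. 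The Foxby identity $D \otimes_R T_\p \cong E_R(R/\p)$ then makes each $G_k/G_{k+1}$ a direct sum of injective hulls $E_R(R/\p)^{(I_\p)}$ with $\height(\p)=k$, which is in particular Gorenstein injective; closure of the Gorenstein injectives under extensions makes each $G_k$ itself Gorenstein injective.

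The identification $G_{(\p)} \cong \Tor{k}{T_\p}{G}$ would follow by computing the graded pieces in two ways: on one hand via the explicit Matlis structure above, and on the other hand by pairing the injective resolution of $G$ with a resolution of $T_\p$. Matlis summands not indexed by $\p$ should contribute trivially to $\Tor{k}{T_\p}{-}$, and the homological degree $k$ would match $\height(\p)$ through the local-cohomology/Bass-number dictionary over $R_\p$. The property $\tp$ then reduces to a local statement about the $E_R(R/\p)$-summands, and uniqueness and functoriality of the decomposition are formal consequences of the uniqueness of Matlis decomposition together with the naturality of Foxby equivalence.

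\textbf{Main obstacle.} The most delicate step is expected to be the Tor identification $G_{(\p)} \cong \Tor{k}{T_\p}{G}$ with the homological degree $k$ matching $\height(\p)$. Since $T_\p$ typically has infinite projective dimension, its higher Tors cannot be computed directly, and one likely needs either an explicit Koszul- or local-cohomology-based flat resolution of $T_\p$, or a collapsing spectral-sequence argument, to isolate the correct degree-$k$ contribution for each prime of height $k$. A secondary technical point is verifying that the abstractly constructed filtration $G_k$ coincides naturally with the one described via the Tor formula in the statement.
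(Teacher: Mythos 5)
Your plan diverges from the paper's proof, which forms a double complex from the minimal injective resolution $J$ of $D$ (passed through $\hom_R(D,-)$) and a projective resolution of $G$, then uses the single-degree Tor-vanishing of Proposition~\ref{20141009.1} to make one of the two spectral sequences collapse on a diagonal; that collapse is what identifies the $k$th graded piece as $\tor_k^R(\hom_R(D,J^k),G)$.

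There is a genuine gap in your version. You claim the graded pieces $G_k/G_{k+1}$ are direct sums of injective hulls $E_R(R/\p)^{(I_\p)}$, hence injective. This is false in general: the paper's Example~\ref{ex150430a} (over the hypersurface $R=k[\![X,Y]\!]/(X^2)$) produces a Gorenstein injective $G$ with $G_{(\m)}\cong E_{R/\q}(k)$, a module that is not injective over $R$. The pieces $G_{(\p)}$ are only Gorenstein injective (Proposition~\ref{20141010.3}); they coincide with direct sums of copies of $E_R(R/\p)$ only when $G$ itself is injective (Proposition~\ref{20141009.1}). The error enters where you ``transport'' the height filtration of $\hom_R(D,E^\bullet)$ to a filtration of $H=\hom_R(D,G)$. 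Filtering each term by the heights of indexing primes does filter the complex, but the resulting spectral sequence abuts to a filtration of $H$ whose associated graded consists of $E_\infty$-terms, i.e.\ subquotients obtained by taking homology on the $E_0$-page, not the $E_0$-terms themselves. A subquotient of $\oplus_{\height(\p)=k}T_\p^{(\mu)}$ need not again be such a direct sum, so the Matlis structure you rely on is lost in passing to homology. What is true is that these $E_\infty$-terms turn out to be the Tor-modules $\tor_k^R(\hom_R(D,E_R(R/\p)),G)$, but establishing that identification is the real content of the theorem, and your outline currently assumes it rather than proving it.
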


From this, one might expect us to follow Enochs and Huang's lead by proving that the class of Gorenstein injective modules is closed under tensor products in our setting.
However, Example~\ref{ex141218a} below shows that this is not the case for non-Gorenstein rings.
Furthermore, Example~\ref{ex141231a} 
shows that the class of Gorenstein injective modules is not closed under Tor-modules, even when 
$R$ is a local, complete  hypersurface, addressing~\cite[Remark 4.2]{enochs}.
In addition, we show in Example~\ref{ex150430a} that the filtration from Theorem~\ref{20141114.1} does not give a direct sum decomposition, as one might expect
given Matlis' result, even when $R$ is a local, complete  hypersurface.

In contrast with Example~\ref{ex141218a}, though, we prove the following result in Theorem~\ref{cor150430a}.
It says 
that, under suitable hypotheses, the class of Gorenstein injective modules is closed
under tensor products.

\begin{intro}
\label{20141114.2}
Let $R$ be Cohen-Macaulay with a dualizing module $D$. 
Assume that $R$ is generically Gorenstein.
If $G$ and $H$ are Gorenstein injective $R$-modules, then $G \otimes_R H$ is also Gorenstein injective.
\end{intro}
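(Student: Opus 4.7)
The plan is to reduce Theorem~\ref{20141114.2} to a statement about Gorenstein flatness, using Foxby equivalence together with the height filtration from Theorem~\ref{20141114.1}. Over a Cohen-Macaulay ring with dualizing module $D$, one has the characterization that $M$ is Gorenstein injective if and only if $M \in \bd$ and $\hom_R(D,M)$ is Gorenstein flat. Accordingly, set $A := \hom_R(D,G)$ and $B := \hom_R(D,H)$; these modules lie in $\cata{D}{R}$ and are Gorenstein flat, and the Bass-class evaluation isomorphism gives $G \cong D \otimes_R A$ and $H \cong D \otimes_R B$. Associativity of the tensor product then yields
\[
G \otimes_R H \;\cong\; D \otimes_R (A \otimes_R H),
\]
so it suffices to show that $A \otimes_R H$ lies in $\cata{D}{R}$ and is Gorenstein flat; the Foxby functor $D \otimes_R (-)$ will then carry it into the Gorenstein injective class.

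To verify this, the generic Gorenstein hypothesis is essential: a Cohen-Macaulay, generically Gorenstein ring with dualizing module admits an embedding $D \hookrightarrow R$ realizing $D$ as a faithful ideal containing a non-zerodivisor, yielding a short exact sequence
\[
0 \to D \to R \to C \to 0
\]
with $\dim C < d$. Tensoring this sequence against the relevant modules and combining with the Tor-vanishing axioms of $\cata{D}{R}$ already satisfied by $A$ and $B$, together with $H \in \bd$, I would verify the two defining conditions of $\cata{D}{R}$ for $A \otimes_R H$: namely, the vanishing $\Tor{i}{D}{A \otimes_R H} = 0$ for $i \geq 1$, and the fact that the evaluation map $A \otimes_R H \to \hom_R(D, D \otimes_R (A \otimes_R H))$ is an isomorphism. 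For the Gorenstein flatness of $A \otimes_R H$, I would then exploit the filtration of Theorem~\ref{20141114.1}: after reducing to the building blocks $H_{(\p)} = \Tor{k}{\hom_R(D,E_R(R/\p))}{H}$, whose torsion property $\tp$ concentrates them at $\p$, one analyzes $A \otimes_R H_{(\p)}$ locally. At minimal primes $\p$, the generic Gorenstein hypothesis makes $R_\p$ Gorenstein and $D_\p \cong R_\p$, so Enochs and Huang's theorem over Gorenstein rings gives Gorenstein flatness and supplies the base case of the induction.

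The main obstacle I anticipate is propagating Gorenstein flatness through the induction on height, since at non-minimal primes $R_\p$ need not be Gorenstein and the Enochs--Huang machinery does not apply directly. Example~\ref{ex141218a} confirms that no purely formal argument can succeed without the generic Gorenstein hypothesis, so the structural input must be exploited somewhere substantive. The crux is therefore to deploy the auxiliary sequence $0 \to D \to R \to C \to 0$ together with the height filtration to step from one height stratum to the next, using $\dim C < d$ to force the relevant Tor-obstructions to vanish in sufficiently high homological degree and thereby close the induction.
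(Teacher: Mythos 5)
Your proposal takes a genuinely different route from the paper's, but it is incomplete: you explicitly flag an unresolved ``obstacle'' at the end, and that obstacle is in fact the heart of the matter, so the argument as written does not close.

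The key insight you are missing is Lemma~\ref{20141002.2}: over an $(S_1)$ ring, if $T$ satisfies $t(\p)$ with $\height(\p)\geq 1$ and $G$ is Gorenstein injective, then $G\otimes_R T=0$. This single observation makes the ``induction on height'' that worries you entirely unnecessary --- there is nothing to propagate, because every stratum above height $0$ contributes zero to the tensor product. The paper's proof (Theorem~\ref{thm141218a}, of which Theorem~\ref{cor150430a} is the case $C=R$) exploits this directly: apply the filtration of Theorem~\ref{cor141225a} to both $G$ and $H$, use Lemma~\ref{lem141224c} to discard cross-terms $G_{(\p)}\otimes_R H_{(\q)}$ with $\p\neq\q$, use Lemma~\ref{20141002.2} to discard all terms with $\height(\p)\geq 1$, and you are left with the case where $G$ and $H$ both satisfy $t(\p)$ for a minimal prime $\p$. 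Then $G$, $H$, and $G\otimes_R H$ are all $R_\p$-modules, $R_\p$ is an Artinian local Gorenstein ring by the generic Gorenstein hypothesis (so $D_\p\cong R_\p$), and Lemma~\ref{lem141218a} says every $R_\p$-module is Gorenstein injective over $R$. Done. There is no dimension-shifting and no use of the embedding $D\hookrightarrow R$.

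Your Foxby-equivalence detour also introduces claims that are not substantiated and are not obviously true. You want $A\otimes_R H\in\cata{D}{R}$ where $A=\hom_R(D,G)\in\cata{D}{R}$ and $H\in\bd$; there is no general reason the tensor product of an Auslander-class module with a Bass-class module should land in the Auslander class. Moreover, translating to Gorenstein flatness of $A\otimes_R H$ actually obscures the useful vanishing: the proof of Lemma~\ref{20141002.2} uses that multiplication by a regular element is \emph{surjective} on a Gorenstein injective module (via $\Ext{1}{R/(r)}{G}=0$), and the analogous statement for a Gorenstein flat $A$ is \emph{injectivity}, not surjectivity, so the vanishing $A\otimes_R T=0$ for $\p$-torsion $T$ does not follow the same way. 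Staying on the Gorenstein injective side and tensoring $G$ against $H$ (or against $H_{(\p)}$) directly is what makes the argument short. The lesson is that the generically Gorenstein hypothesis is not needed to drive an induction; it is needed only at the minimal primes, and Lemma~\ref{20141002.2} reduces you to exactly that case with no further work.
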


We conclude this introduction by summarizing the organization of this paper.
Section~\ref{sec150101a} contains background information, and
Section~\ref{sec150101b} consists of technical results central to our main proofs.
Section~\ref{sec141229a} contains the proof of  Theorem~\ref{20141114.1}, and come consequences.
Section~\ref{sec150430a} consists of the aforementioned examples.
The paper concludes with Section~\ref{sec141229b} which gives a more general version of  Theorem~\ref{20141114.1},
in addition to the proof of Theorem~\ref{20141114.2}.

\section{Foundational Notions}
\label{sec150101a}

We begin with a definition due to Foxby~\cite{foxby:gmarm}, generalizing Grothendieck's 
notion of a dualizing module from~\cite{hartshorne:lc}, and introduced independently by Golod~\cite{golod:gdagpi} and Vasconcelos~\cite{vasconcelos:dtmc}.

\begin{definition}
A finitely-generated $R$-module $C$ is {\it semidualizing} if the homothety map $\chi_C^R: R \to \hom_R(C,C)$ is an isomorphism, and $\ext_R^i(C,C) = 0$ for all $i \geq 1$.
\end{definition}

\begin{assumption}
We assume for the remainder of this section that $C$ is a semidualizing $R$-module.
\end{assumption}

\begin{fact}\label{fact141223a}
The ring $R$ always has a semidualizing module, namely, the $R$-module $R$. 
By definition, a dualizing module is a semidualizing module of finite injective dimension.
Not every ring has a dualizing module: $R$ has a dualizing module if and only if it is Cohen-Macaulay and a homomorphic image of a Gorentein ring with finite Krull dimension; see~\cite{foxby:gmarm,reiten:ctsgm,sharp:gmccmlr}. The proof of one implication uses Nagata's ``idealization'' of $D$ (a.k.a., the ``trivial extension'' of $R$ by $D$). As we use this construction in the sequel, we describe it here.

Let $M$ be a finitely-generated $R$-module. Endow the direct sum $R\oplus M$ with the following binary product: $(r,m)(s,n):=(rs,rn+sm)$. This makes $R\oplus M$ into a commutative noetherian ring with identity $(1,0)$, which we denote $R\ltimes M$. Note that the natural epimorphism $R\ltimes M\to R$ is a ring homomorphism. In particular, every $R$ module has the structure of an $R\ltimes M$-module. It is shown in~\cite{foxby:gmarm,reiten:ctsgm} that if $D$ is dualizing for $R$, then $R\ltimes D$ is Gorenstein.

Note that if $C$ is semidualizing for $R$ and $D$ is dualizing for $R$, then $\hom_R(C,D)$ is semidualizing for $R$; see, e.g.,~\cite[4.11]{vasconcelos:dtmc}.
\end{fact}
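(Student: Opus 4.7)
The plan is to dispatch the definitional and formal assertions quickly, then outline the substantive steps that require real work and are largely drawn from the cited literature. The claim that $R$ is semidualizing over itself is immediate: $\chi_R^R$ is the canonical identification of $R$ with $\hom_R(R,R)$, and $\ext_R^i(R,R)=0$ for $i\geq 1$ since $R$ is free of rank one. The statement that a dualizing module is a semidualizing module of finite injective dimension is definitional. For the idealization, a direct algebraic check verifies that the prescribed product on $R\oplus M$ is commutative and associative with identity $(1,0)$; noetherianness of $R\ltimes M$ follows because it is finitely generated as an $R$-module and $R$ itself is noetherian, and the projection map onto $R$ is visibly a ring homomorphism with kernel $0\oplus M$ which has square zero.

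For the characterization of rings admitting a dualizing module, the nontrivial direction is the construction: given a surjection $\pi\colon S \to R$ where $S$ is Gorenstein of finite Krull dimension and $R$ is Cohen-Macaulay, one sets $D := \ext_S^n(R,S)$ with $n$ equal to the grade of $\ker(\pi)$. Then $D$ is finitely generated by construction, its injective dimension over $R$ is finite by a change-of-rings argument exploiting finite self-injective dimension of $S$, and the homothety and Ext vanishing conditions are verified via local duality applied to $S$ prime-by-prime. Conversely, if $R$ admits a dualizing module $D$, then the idealization $R\ltimes D$ (shown below to be Gorenstein) of finite Krull dimension surjects onto $R$, and $R$ is forced to be Cohen-Macaulay because the existence of a module of finite injective dimension with invertible homothety constrains the depths and dimensions of all localizations to agree.

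The principal obstacle, and the heart of the cited references, is the assertion that $R\ltimes D$ is Gorenstein whenever $D$ is dualizing. The strategy is to show that $R\ltimes D$ has finite self-injective dimension. A short exact sequence $0 \to D \to R\ltimes D \to R \to 0$ of $R\ltimes D$-modules, together with the compatibilities $\hom_R(D,D)\cong R$ and $\ext_R^{\geq 1}(D,D)=0$, allows one to assemble a bounded injective resolution of $R\ltimes D$ over itself by lifting the finite injective resolution of $D$ over $R$ along the projection $R\ltimes D \to R$ and splicing in the corresponding data for $R$; finite Krull dimension of $R$ combined with finite $\id_R D$ yields the bound.

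Finally, for the semidualizing property of $\hom_R(C,D)$, the main inputs are that $C$ lies in the Auslander class with respect to $D$, so the natural biduality map $C \to \hom_R(\hom_R(C,D),D)$ is an isomorphism and $\ext_R^{\geq 1}(\hom_R(C,D),D)=0$. Applying $\hom_R(-,\hom_R(C,D))$ and using hom-tensor adjunction together with the semidualizing conditions on $C$ and $D$ separately yields both the homothety isomorphism $R \to \hom_R(\hom_R(C,D),\hom_R(C,D))$ and the Ext vanishing, as detailed in~\cite[4.11]{vasconcelos:dtmc}.
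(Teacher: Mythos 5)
This item is stated in the paper as a background \emph{Fact} with no proof at all: the substantive assertions are simply attributed to Foxby, Reiten, Sharp, and Vasconcelos, so there is no in-paper argument to measure you against. Your sketch correctly reproduces the standard arguments from those references, and I see no gap that would invalidate it. Three small comments. First, for the construction $D=\ext_S^n(R,S)$ in the non-local case, a single $n=\operatorname{grade}(\ker\pi)$ need not work uniformly when $\spec(R)$ is disconnected or the codimension jumps; the module has to be built locally (or component-wise) and glued, which is precisely the content of the Sharp--Foxby results being cited. Second, for the Gorensteinness of $R\ltimes D$ there is a cleaner route than splicing resolutions along $0\to D\to R\ltimes D\to R\to 0$: apply $\hom_R(R\ltimes D,-)$ to a finite injective resolution of $D$ over $R$; since $\ext_R^i(R\ltimes D,D)\cong\ext_R^i(R,D)\oplus\ext_R^i(D,D)$ vanishes for $i\geq 1$ and equals $D\oplus R\cong R\ltimes D$ for $i=0$, this produces a bounded injective resolution of $R\ltimes D$ over itself directly. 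Third, for $\hom_R(C,D)$ the cleanest framing is dagger duality rather than the Auslander class per se: one uses that $C$ is locally maximal Cohen--Macaulay to get $\ext_R^{\geq 1}(C,D)=0$ together with the biduality isomorphism $C\xrightarrow{\cong}\hom_R(\hom_R(C,D),D)$, and then hom-tensor manipulations give the homothety isomorphism and Ext-vanishing for $\hom_R(C,D)$, exactly as in the cited~\cite[4.11]{vasconcelos:dtmc}.
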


Next, we have the central objects of study in this paper. In the case $C=R$, they were introduced by Enochs and Jenda~\cite{enochs:gipm}. The general definition is due to Holm and J\o rgensen~\cite{holm:smarghd}.

\begin{definition}\label{defn150101a}
A \emph{complete $\ici$-resolution} is an exact sequence $X$ of the form
$$\cdots\to\hom_R(C,I_1)\to\hom_R(C,I_0)\xrightarrow{\partial}I^0\to I^1\to\cdots$$
such that $I_j$ and $I^j$ are injective for all $j$, and such that $\hom_R(\hom_R(C,J),X)$ is exact for each injective $R$-module $J$. An $R$-module $G$ is \emph{$C$-Gorenstein injective} if it has a complete $\ici$-resolution; that is, if there is a complete $\ici$-resolution as above such that $G\cong\operatorname{Im}(\partial)$. 
Note that in the case $C=R$, these are the already-defined
complete injective resolution and Gorenstein injective module.

The \emph{$C$-Gorenstein injective dimension} of an $R$-module $M$, denoted $C\text{-}\gid_R(M)$, is the length $n$ of the shortest resolution
$$0\to M\to G^0\to\cdots\to G^n\to 0$$
of $M$ by $C$-Gorenstein injective modules, if such a bounded resolution exists. If no such resolution exists, then $C\text{-}\gid_R(M)=\infty$.
In the case $C=R$, we write $\gid_R(M)$ instead of $R\text{-}\gid_R(M)$.
\end{definition}

\begin{example}\label{ex141223a}
If $I$ is an injective $R$-module, then $I$ and $\hom_R(C,I)$ are $C$-Gorenstein injective over $R$.
\end{example}

The following fact shows the deep connection between the case $C=R$ and the general case.

\begin{fact}[\protect{\cite[2.13(1) and 2.16]{holm:smarghd}}]
\label{fact141223b}
An $R$-module $G$ is $C$-Gorenstein injective over $R$ if and only if it is Gorenstein injective over $R\ltimes C$; see Fact~\ref{fact141223a}. Moreover, one has $C\text{-}\gid_R(M)=\gid_{R\ltimes C}(M)$ for each $R$-module $M$.
(See also~\cite{aung}.)
\end{fact}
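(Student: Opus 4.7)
The plan is to exhibit a dictionary between complete $\ici$-resolutions over $R$ and complete injective resolutions over the trivial extension $S := R \ltimes C$, regarding every $R$-module as an $S$-module via the ring surjection $S \twoheadrightarrow R$.

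First I would establish the key structural fact: every injective $S$-module is isomorphic to $\hom_R(S,I) \cong I \oplus \hom_R(C,I)$ for some injective $R$-module $I$. Since $S$ is $R$-free, the coinduction functor $\hom_R(S,-)$ sends injectives to injectives, giving one containment. For the reverse, I would invoke Matlis theory over the noetherian ring $S$ combined with the bijection $\spec S \longleftrightarrow \spec R$ coming from the nilpotency of the kernel of $S \to R$, identifying each indecomposable injective $S$-module with a coinduced module $\hom_R(S,E_R(R/\p))$.

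For the direction $(\Rightarrow)$, given a complete $\ici$-resolution $X$ with $G \cong \operatorname{Im}(\partial)$, each term of $X$ is already an injective $S$-module, being a direct summand of $\hom_R(S,I_j)$ or $\hom_R(S,I^j)$; summands of injectives are injective. To verify the Hom-vanishing, I would use the adjunction
$$\hom_S\bigl(-,\hom_R(S,J)\bigr) \cong \hom_R(-,J)$$
to reduce the required exactness of $\hom_S(X,E)$ for injective $S$-modules $E$ to the exactness of $\hom_R(X,J)$ for injective $R$-modules $J$, and then connect this to the $\ici$-exactness of $\hom_R(\hom_R(C,J),X)$ via the further adjunction $\hom_R(\hom_R(C,J),\hom_R(C,I)) \cong \hom_R(J,I)$ on the lower half of $X$. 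The reverse direction $(\Leftarrow)$ proceeds by decomposing a complete injective $S$-resolution into $\ici$-form using the structural fact and the same pair of adjunctions.

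The main obstacle will be the rigorous translation between the two Hom-vanishing conditions, which are asymmetric in different ways: the $\ici$ condition places its tester on the left of Hom, whereas the ordinary complete injective resolution places it on the right. Reconciling these requires careful use of the tensor-hom and coinduction adjunctions together with Bass-class properties of modules of the form $\hom_R(C,J)$. The dimension equality in the second assertion then follows by applying these translations to shortest resolutions on each side.
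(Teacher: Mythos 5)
The paper does not prove this Fact; it cites Holm and J\o rgensen \cite[2.13(1) and 2.16]{holm:smarghd}, so you are attempting to supply an argument where the paper supplies only a reference.

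Your proposal has a fatal gap at the very first step of the direction $(\Rightarrow)$: it is \emph{not} true that the terms of a complete $\ici$-resolution, viewed as modules over $S := R\ltimes C$ via the surjection $S\twoheadrightarrow R$, are injective over $S$. You justify this by claiming each such term is a direct summand of a coinduced module $\hom_R(S,I)$. But the natural short exact sequence of $S$-modules
$$0 \to \hom_R(R,I) \to \hom_R(S,I) \to \hom_R(C,I) \to 0$$
obtained by applying $\hom_R(-,I)$ to $0\to C\to S\to R\to 0$ does \emph{not} split as $S$-modules: the $S$-action on $\hom_R(S,I)$ mixes the two pieces (explicitly, $(0,c)$ sends the graded piece $\hom_R(C,I)$ nontrivially into the graded piece $\hom_R(R,I)$), so the $R$-module decomposition $\hom_R(S,I)\cong I\oplus\hom_R(C,I)$ is not an $S$-module decomposition. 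In fact one can see directly that an $R$-module $I$ with trivial $C$-action is essentially never injective over $S$. Take $C=R$, so $S\cong R[\epsilon]/(\epsilon^2)$ and the ideal $(\epsilon)\cong S/(\epsilon)\cong R$ as $S$-modules. Any $S$-linear map $S\to I$ kills $\epsilon$, so the restriction map $\hom_S(S,I)\to\hom_S((\epsilon),I)\cong I$ is the zero map; hence it is not surjective when $I\neq 0$, and $I$ is not $S$-injective. Consequently, for $C=R$ a complete injective $R$-resolution is \emph{not} a complete injective $S$-resolution term by term, contrary to what your argument requires, even though the Fact itself (which says a module is Gorenstein injective over $R$ iff over $R\ltimes R$) remains true.

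The adjunction machinery you invoke is, by contrast, essentially correct, once one fixes a sign issue: the defining exactness conditions both have the tester on the left, so one uses $\hom_S(M,N)\cong\hom_R(M/CM,N)$ for $R$-modules $N$ together with $\hom_R(S,J)/C\!\cdot\!\hom_R(S,J)\cong\hom_R(C,J)$ to get $\hom_S(\hom_R(S,J),X)\cong\hom_R(\hom_R(C,J),X)$, which nicely matches the $\ici$ exactness condition. What is missing is the genuinely hard part: the complete injective $S$-resolution of $G$ has every term of the form $\hom_R(S,J)$, and this complex is a \emph{different} complex from the $\ici$-resolution. One must construct one from the other (for instance by splicing a left $\hom_R(C,-)$-coresolution with a right injective coresolution and showing the resulting complex lifts to a complex of coinduced $S$-modules), and this construction is exactly where the content of \cite[Theorem 2.16]{holm:smarghd} lies. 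As written, your argument identifies the wrong complex and therefore does not prove the equivalence.
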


Next, we note some useful consequences of Fact~\ref{fact141223b} and~\cite{holm}.

\begin{lemma}\label{lem141223a}
Let $M$ be an $R$-module such that $C\text{-}\gid_R(M)<\infty$.
\begin{enumerate}[(a)]
\item \label{lem141223a1}
One has $C\text{-}\gid_R(M)\leq\dim(R)$.
\item \label{lem141223a2}
If there is an exact sequence
$$0\to N\to G_{\dim(R)}\to\cdots\to G_1\to M\to 0$$
such that each $G_i$ is $C$-Gorenstein injective over $R$, then $M$ is $C$-Gorenstein injective as well.
\end{enumerate}
\end{lemma}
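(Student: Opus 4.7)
The strategy for both parts is to pass to the idealization $S := R \ltimes C$ via Fact~\ref{fact141223b}, which identifies $C$-Gorenstein injective modules over $R$ with ordinary Gorenstein injective modules over $S$, and then to invoke results of Holm~\cite{holm}. The key dimensional observation is $\dim(S) = \dim(R)$: since $S \cong R \oplus C$ as $R$-modules and $C$ is finitely generated, the inclusion $R \hookrightarrow S$ is module-finite and hence preserves Krull dimension.

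For part~(a), Fact~\ref{fact141223b} gives $C\text{-}\gid_R(M) = \gid_S(M)$, which is finite by hypothesis. Holm's Bass-type bound from~\cite{holm} then yields $\gid_S(M) \leq \dim(S) = \dim(R)$, which is the desired inequality.

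For part~(b), Fact~\ref{fact141223b} tells us each $G_i$ is Gorenstein injective over $S$, and $\gid_S(M) \leq \dim(S) =: n$ by part~(a). Holm's syzygy criterion for finite Gorenstein injective dimension (from~\cite{holm}) then implies that the leftmost kernel $N$ is Gorenstein injective over $S$, because the given sequence has length $n$ with Gorenstein injective intermediate terms and $\gid_S(M) \leq n$. A standard dimension-shifting argument applied to $\ext_S(L,-)$ for $L$ of finite injective dimension, using the vanishing $\ext^{\geq 1}_S(L,G_i) = 0$ for each $i$ and $\ext^{\geq 1}_S(L,N) = 0$, then yields $\ext^i_S(L,M) \cong \ext^{i+n}_S(L,N) = 0$ for every $i \geq 1$. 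Combined with $\gid_S(M) < \infty$, Holm's characterization of Gorenstein injective modules by Ext-vanishing forces $M$ to be Gorenstein injective over $S$. Translating back via Fact~\ref{fact141223b} gives the conclusion.

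I do not anticipate a serious obstacle: everything reduces to known results of Holm applied over the idealization $S = R \ltimes C$. The only verifications are the dimension equality $\dim(R \ltimes C) = \dim(R)$ and the standard dimension-shift pattern. The main conceptual step is recognizing that Fact~\ref{fact141223b} allows us to replace the $C$-theory with the ordinary Gorenstein injective theory, at which point both bounds and syzygy results become off-the-shelf invocations of~\cite{holm}.
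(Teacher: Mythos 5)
Your proof of part~(a) is correct and matches the paper's argument word for word.

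Your proof of part~(b), however, has a genuine gap in the step claiming that ``the leftmost kernel $N$ is Gorenstein injective over $S$.'' There is no such ``syzygy criterion'' for Gorenstein \emph{injective} dimension. The applicable result of~\cite{holm} (Theorem~2.22, which the paper cites) is a \emph{cosyzygy} criterion, and you have the roles of $N$ and $M$ reversed. In fact the claim is false in general: over $R=S=k[\![x]\!]$ (so $C=R$ and $\dim R=1$), the exact sequence $0\to k\to E_R(k)\xrightarrow{\,x\,}E_R(k)\to 0$ has $M=E_R(k)$ and $G_1=E_R(k)$ both Gorenstein injective, yet the kernel $N=k$ satisfies $\gid_R(k)=1\neq 0$. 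What \emph{is} true, and what the paper proves, is that $\gid_S(N)<\infty$ (via the two-of-three property applied to the short exact sequences obtained by splitting up the given resolution — a step your argument omits), hence $\gid_S(N)\leq\dim(R)=n$ by part~(a); then the sequence, read as an $n$-step coresolution of $N$ by Gorenstein injective $S$-modules with cosyzygy $M$, falls under~\cite[Theorem~2.22]{holm}, giving that $M$ is Gorenstein injective over $S$. Your dimension-shifting computation $\ext^i_S(L,M)\cong\ext^{i+n}_S(L,N)$ for $i\geq 1$ is correct as stated, and interestingly it only requires the weaker input $\gid_S(N)\leq n$ (so that $\ext^{i+n}_S(L,N)=0$ once $i+n>n$), not the false claim that $N$ is Gorenstein injective; so your Ext-shift essentially re-proves Holm's cosyzygy theorem, but the premise you feed it has not been established.
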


\begin{proof}
\eqref{lem141223a1}
This is by Fact~\ref{fact141223b} and~\cite[Theorem 2.29]{holm}, as follows:
$$C\text{-}\gid_R(M)=\gid_{R\ltimes C}(M)\leq\dim(R\ltimes C)=\dim(R).$$
See also, e.g.,~\cite[1.4 and 3.3]{3danes}.

\eqref{lem141223a2}
From the given exact sequence, Fact~\ref{fact141223b} implies that $$\gid_{R\ltimes C}(N)=C\text{-}\gid_R(N)<\infty$$
so $\gid_{R\ltimes C}(N)\leq\dim(R)$ by part~\eqref{lem141223a1}. Using~\cite[Theorem 2.22]{holm}, we conclude that $M$ is Gorenstein injective over $R\ltimes C$, 
so $M$ is $C$-Gorenstein injective over $R$ by Fact~\ref{fact141223b}.
\end{proof}

The following class originates in~\cite{foxby:gmarm}. It is incredibly useful for studying the above homological dimensions, because of Fact~\ref{fact141223c}.

\begin{definition}
The {\it Bass class} $\bc(R)$ consists of all $R$-modules $M$ such that
the evaluation map $C \otimes_R \hom_R(C,M) \to M$ is an isomorphism, and
$\ext_R^i(C,M) = 0 = \tor_i^R(C,\hom_R(C,M))$ for all $i \geq 1$.
We write $\bc$ for this class if the ring $R$ is understood.
\end{definition}

\begin{fact}\label{fact141223c}
From~\cite[Corollary 6.3]{holm:fear}, we know that $\mathcal B_C$ has the ``two-of-three" property for short exact sequences of $R$-modules; that is, given
an exact sequence $0\to M_1\to M_2\to M_3\to 0$ of $R$-modules, if two of the $M_i$ are in $\bc$, then so is the third $M_i$.

Assume that $R$ has a dualizing module $D$. Then an $R$-module $M$ has finite $C$-Gorenstein injective dimension if and only if it is in $\mathcal{B}_{\hom_R(C,D)}$ by~\cite[Theorem 4.6(2)]{holm:smarghd}. In particular, one has $\gid_R(M)<\infty$ if and only if $M\in\bd$; see also~\cite[Theorem 4.1]{3danes}. 
\end{fact}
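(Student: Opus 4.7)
The plan for the two-of-three property in $\bc$ is a direct computation using the three defining conditions. Given a short exact sequence $0\to M_1\to M_2\to M_3\to 0$, I would first apply the long exact sequence
\[
\cdots\to\Ext{i}{C}{M_1}\to\Ext{i}{C}{M_2}\to\Ext{i}{C}{M_3}\to\Ext{i+1}{C}{M_1}\to\cdots
\]
and note that if the higher Ext vanishes for two of the three modules, then a short diagram chase shows it vanishes for the third. Because $\Ext{1}{C}{M_1}=0$ in each of the three cases, the functor $\hom_R(C,-)$ preserves exactness of the given sequence, yielding a short exact sequence $0\to\hom_R(C,M_1)\to\hom_R(C,M_2)\to\hom_R(C,M_3)\to 0$. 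Applying $C\otimes_R-$ and invoking the long exact Tor sequence, the analogous two-of-three argument handles the Tor-vanishing. Finally, the evaluation maps $C\otimes_R\hom_R(C,M_i)\to M_i$ assemble into a morphism of short exact sequences, so the five lemma gives the remaining evaluation map is an isomorphism from the other two.

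For the characterization of finite $C$-Gorenstein injective dimension, my plan is to reduce to the case $C=R$ via the idealization. By Fact~\ref{fact141223b}, the condition $C\text{-}\gid_R(M)<\infty$ is equivalent to $\gid_{R\ltimes C}(M)<\infty$. When $D$ is dualizing for $R$, a standard calculation with Nagata's construction shows that $\hom_R(C,D)$, viewed as an $R\ltimes C$-module with $C$ acting as zero, is dualizing for the ring $R\ltimes C$ (which is Cohen-Macaulay by Fact~\ref{fact141223a}). The task then becomes: show that $\bc$ computed over $R\ltimes C$ with respect to this dualizing module coincides, at the level of underlying modules, with $\catb{\hom_R(C,D)}{R}$ computed over $R$. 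Granting this identification, the in-particular assertion (the case $C=R$) handles the reduction, and conversely one recovers the general statement.

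For the case $C=R$, the plan is the classical argument: the ``only if'' direction uses that $M\in\bd$ forces the standard injective resolution of $\hom_R(D,M)$ to pull back through $D\otimes_R-$ to give a finite-length Gorenstein injective resolution of $M$, while the ``if'' direction uses a syzygy-style argument combined with Lemma~\ref{lem141223a}\eqref{lem141223a1} to bound the length and with Ext-vanishing (detected via the finite injective dimension of $D$) to verify membership in $\bd$.

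The main obstacle will be the second paragraph: verifying that the Bass class over $R\ltimes C$ with respect to $\hom_R(C,D)$ genuinely matches $\catb{\hom_R(C,D)}{R}$ on underlying modules. One must carefully track how $\hom$, $\otimes$, $\Ext$, and $\Tor{i}{-}{-}$ computed over $R\ltimes C$ compare with their $R$-linear counterparts when the relevant modules are annihilated by the ideal $0\oplus C$. This bookkeeping is precisely the substance of~\cite[Theorem 4.6(2)]{holm:smarghd}, and it is the reason we record this compatibility here as a fact rather than reproving it.
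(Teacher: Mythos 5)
The paper records this statement as a Fact with no proof of its own: both halves are quoted from the literature (Holm--White for the two-of-three property; Holm--J\o rgensen and Christensen--Frankild--Holm for the Bass-class characterizations). Your sketch of the first half has a genuine gap. The long exact sequence in $\ext_R(C,-)$ settles the Ext-vanishing in only two of the three cases. In the remaining case, where $M_2,M_3\in\bc$ and $M_1$ is the unknown, the connecting maps give $\ext_R^i(C,M_1)=0$ only for $i\geq 2$; the segment $\hom_R(C,M_2)\to\hom_R(C,M_3)\to\ext_R^1(C,M_1)\to\ext_R^1(C,M_2)=0$ identifies $\ext_R^1(C,M_1)$ with the cokernel $Q$ of $\hom_R(C,M_2)\to\hom_R(C,M_3)$, and no diagram chase makes this vanish. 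One must instead use the evaluation isomorphisms for $M_2$ and $M_3$ to see that $C\otimes_R\hom_R(C,M_2)\to C\otimes_R\hom_R(C,M_3)$ is onto, deduce $C\otimes_RQ=0$, and then conclude $Q=0$ because a semidualizing module has full support (Nakayama at a prime in $\supp_R(Q)$). The same issue recurs for the Tor condition: in the case $M_1,M_2\in\bc$, the vanishing of $\tor_1^R(C,\hom_R(C,M_3))$ is not read off the long exact sequence but from injectivity of $C\otimes_R\hom_R(C,M_1)\to C\otimes_R\hom_R(C,M_2)$, which again needs the evaluation isomorphisms. So your assertion that $\ext_R^1(C,M_1)=0$ holds ``in each of the three cases'' assumes part of what is to be proved; this is exactly why the paper cites \cite[Corollary 6.3]{holm:fear} rather than giving a quick argument.

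For the second half, your reduction through the idealization is in the right spirit but contains an error: the dualizing module of $S=R\ltimes C$ is $\hom_R(S,D)\cong\hom_R(C,D)\oplus D$, not $\hom_R(C,D)$ with $0\oplus C$ acting as zero (compare the parenthetical remark in the proof of Theorem~\ref{cor141225a}). Matching the Bass class over $S$ with respect to this module against $\catb{\hom_R(C,D)}{R}$ on underlying $R$-modules is then precisely the content of \cite[Theorem 4.6(2)]{holm:smarghd}, which you ultimately defer to---as does the paper. Since the statement is recorded as a Fact with citations, deferring there is acceptable, but as written your sketch would not assemble into a proof without repairing the two points above.
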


The next notion is convenient for this investigation.

\begin{definition}
\label{def:tp}
Let $\p \in \spec(R)$. We say an $R$-module $S$ has property $t(\p)$ if
\begin{enumerate}[(a)]
\item for each $r \in R\setminus\p$ the map $S \xrightarrow{r} S$ is an isomorphism; and
\item for each $x \in S$ we have $\p^mx=0$ for some $m \geq 1$.
(In this case, we say that $x$ is \emph{locally nilpotent} on $S$.)
\end{enumerate}
In particular, the $R$-module $\kp:=R_{\p}/\p R_{\p}$ 
and the injective hull $E_R(R/\p)$ both satisfy $t(\p)$; see~\cite[Lemma 3.2]{matlis}.
\end{definition}

We continue with a brief discussion of local cohomology~\cite{hartshorne:lc}.

\begin{definition}\label{notn141231a}
Let $\mathfrak a$ be an ideal of $R$, and let $N$ be an $R$-module.
The \emph{$\mathfrak a$-torsion submodule of $N$} is 
defined as $\Gamma_{\mathfrak a}(N):=\{n\in N\mid\text{$\mathfrak a^mn=0$ for $m\gg 0$}\}$.
The \emph{$i$th local cohomology module} of $N$ is $H^i_{\mathfrak a}(N):=H^i(\Gamma_{\mathfrak a}(J))$ where $J$ is an injective resolution of $N$.
\end{definition}

\begin{fact}\label{fact141231a}
Let $\mathfrak a$ be an ideal of $R$, and let $N$ be an $R$-module.
It is straightforward to show that the operation $\Gamma_{\mathfrak a}(-)$ is a left-exact covariant functor.
If an $R$-module $N$ satisfies $\tp[q]$ for some $\q$, then one has
$$\Gamma_{\p}(N_{\p})\cong
\begin{cases}
0&\text{if $\p\neq \q$} \\
N&\text{if $\p=\q$.}
\end{cases}
$$
In particular, we have
$$\Gamma_{\p}(E_R(R/\q)_{\p})\cong
\begin{cases}
0&\text{if $\p\neq \q$} \\
E_R(R/\q)&\text{if $\p=\q$.}
\end{cases}
$$
\end{fact}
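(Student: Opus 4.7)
The plan is to handle the functoriality and left-exactness first, then to compute $\Gamma_\p(N_\p)$ by a short case analysis on how $\p$ relates to $\q$, and finally to note that $E_R(R/\q)$ satisfies $\tp[q]$ to recover the displayed consequence. For the functorial statement, I would identify $\Gamma_{\mathfrak a}(N)=\bigcup_m(0:_N\mathfrak a^m)=\varinjlim_m\hom_R(R/\mathfrak a^m,N)$; each $\hom_R(R/\mathfrak a^m,-)$ is a left-exact covariant additive functor, and a filtered colimit of such functors on $R$-modules is again left-exact, yielding the claim.

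For the main identity, first suppose $\p=\q$. Condition (a) of $\tp[q]$ makes every $r\in R\setminus\q$ act invertibly on $N$, so the localization map $N\to N_\p$ is an isomorphism; condition (b) then gives that every element of $N$ is annihilated by a power of $\p$, so $\Gamma_\p(N_\p)=N_\p\cong N$. If instead $\p\neq\q$, split into two subcases. When $\p\not\subseteq\q$, pick $r\in\p\setminus\q$: by (a), multiplication by $r$ is an automorphism of $N$ and hence of $N_\p$, yet any $x\in\Gamma_\p(N_\p)$ satisfies $r^mx=0$ for some $m$, forcing $x=0$. When $\p\subsetneq\q$, pick $r\in\q\setminus\p$: by (b), for each $x\in N$ one has $r^kx=0$ for some $k$, and since $r$ is a unit in $R_\p$ this forces $x/1=0$, so $N_\p=0$. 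Either way $\Gamma_\p(N_\p)=0$.

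The ``in particular'' statement then follows immediately, because Definition~\ref{def:tp} already records that $E_R(R/\q)$ satisfies $\tp[q]$. No step should pose a real difficulty; the only mild subtlety is keeping straight the two natural meanings of $\p$-torsion on the $R_\p$-module $N_\p$ (using the $R$-ideal $\p$ versus the $R_\p$-ideal $\p R_\p$), but these coincide since an element is killed by $\p^m$ iff it is killed by $(\p R_\p)^m$.
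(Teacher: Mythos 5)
Your proof is correct and is essentially the argument the paper has in mind; the paper records this as a ``Fact'' with no written proof beyond the remark that it is straightforward, and your case analysis (using condition (a) of $\tp[q]$ when $\p\not\subseteq\q$ and condition (b) when $\p\subsetneq\q$, together with the observation that both conditions combine in the case $\p=\q$) is exactly the expected justification. Your closing remark about the agreement of $\Gamma_\p$ and $\Gamma_{\p R_\p}$ on $R_\p$-modules is a worthwhile clarification, and the colimit description $\Gamma_{\mathfrak a}(-)=\varinjlim_m\hom_R(R/\mathfrak a^m,-)$ cleanly disposes of left-exactness.
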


The last definition of this section is due to Foxby~\cite{foxby:bcfm}.

\begin{definition}\label{defn141224a}
The \emph{small support} of an $R$-module $M$ is
$$\supp_R(M):=\{\p\in\spec(R)\mid\text{$\Tor i{\kp}M\neq 0$ for some $i$}\}.$$
\end{definition}

\begin{fact}\label{fact141224a}
Let $M$ be an $R$-module with minimal injective resolution $J$. Then $\p\in\supp_R(M)$ if and only if
$E_R(R/\p)$ is a summand of $J^i$ for some $i$; see~\cite[Remark 2.9]{foxby:bcfm}. 
Also, we have $M\neq 0$ if and only if $\supp_R(M)\neq\emptyset$, by~\cite[Lemma 2.6]{foxby:bcfm}.
Given a second $R$-module $N$, we have $\Tor iMN\neq 0$ for some $i$ if and only if $\supp_R(M)\cap\supp_R(N)\neq \emptyset$,
by~\cite[Proposition 2.7]{foxby:bcfm}.
\end{fact}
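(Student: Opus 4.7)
The plan is to prove the three assertions in order. First I would establish the injective-resolution characterization (1) via Bass's formula, which is the technical heart. The nonemptiness criterion (2) then follows as a direct corollary, and (3) reduces to a residue-field K\"unneth-type computation for the derived tensor product $M\otimes^L_R N$, combined with (2).

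For assertion (1), Matlis's structure theorem writes each $J^i \cong \oplus_{\q} E_R(R/\q)^{\mu^i(\q,M)}$, and Bass's formula identifies the multiplicities as $\mu^i(\p,M) = \dim_{\kp}\Ext[R_{\p}]{i}{\kp}{M_{\p}}$. So $E_R(R/\p)$ appears as a summand of some $J^i$ iff $\Ext[R_\p]{*}{\kp}{M_{\p}} \neq 0$, whereas $\p \in \supp_R(M)$ means exactly $\Tor[R_\p]{*}{\kp}{M_{\p}} \neq 0$ by flat base change. The essential step is therefore to show that for any $R_\p$-module $N$, vanishing of $\Ext[R_\p]{*}{\kp}{N}$ is equivalent to vanishing of $\Tor[R_\p]{*}{\kp}{N}$. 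I would prove this by descending to the $\p R_\p$-adic completion of $R_\p$, where Matlis duality with $E_R(\kp)$ interchanges these two derived functors on appropriate subcategories, and then using faithful flatness of completion to transfer nonvanishings back. Assertion (2) is then immediate: if $M\neq 0$, then $J^0$ is the nonzero injective envelope of $M$ and, by Matlis, contains some $E_R(R/\p)$ as a direct summand, so $\p\in\supp_R(M)$ by (1); the converse is trivial.

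For assertion (3), I would first upgrade (2) to complexes: using the standard fact that small support is additive on short exact sequences, one obtains $\supp_R(X) = \bigcup_i \supp_R(H_i(X))$ for any bounded-below complex $X$. Taking $X:=M\otimes^L_R N$ and combining with (2) yields that some $\Tor[R]{i}{M}{N}$ is nonzero iff $\supp_R(M\otimes^L_R N)\ne\emptyset$. It therefore suffices to prove that $\supp_R(M\otimes^L_R N) = \supp_R(M) \cap \supp_R(N)$. By flat base change I pass to $R_\p$ with residue field $\kappa:=\kp$ and apply the base-change identity in $D(R_\p)$
\[
\kappa\otimes^L_{R_{\p}}(M_{\p}\otimes^L_{R_{\p}}N_{\p})\;\simeq\;(\kappa\otimes^L_{R_{\p}}M_{\p})\otimes^L_{\kappa}(\kappa\otimes^L_{R_{\p}}N_{\p}),
\]
which is a consequence of associativity of $\otimes^L$ and the projection formula for $R_\p\to\kappa$. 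Since $\kappa$ is a field, the right-hand side is acyclic iff one of its two factors is, that is, iff $\p\notin\supp_R(M)$ or $\p\notin\supp_R(N)$; this gives the desired equality of supports.

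The main obstacle is the Ext-vs-Tor equivalence inside (1): over a local ring, proving that vanishing of all $\Ext{*}{k}{-}$ is equivalent to vanishing of all $\Tor{*}{k}{-}$ is not formal and carries the real homological content of the fact.
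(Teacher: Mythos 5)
The paper gives no proof of this Fact: it is quoted with citations to \cite{foxby:bcfm}, so the only meaningful comparison is with Foxby's own arguments. Your architecture matches them: Bass numbers plus local Ext--Tor rigidity for the first assertion, the injective envelope for the second, and for the third the dévissage $\supp_R(X)=\bigcup_i\supp_R(H_i(X))$ together with the associativity identity $\kp\otimes^L_{R_\p}(M_\p\otimes^L_{R_\p}N_\p)\simeq(\kp\otimes^L_{R_\p}M_\p)\otimes^L_{\kp}(\kp\otimes^L_{R_\p}N_\p)$ and the K\"unneth theorem over the field $\kp$. Parts (2) and (3) of your outline are sound as written.

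The gap is in the step you yourself flag as the crux of (1). Matlis duality does not interchange $\ext$ and $\tor$ for the \emph{same} module: for a finitely generated module such as $\kappa=\kp$ one has $\ext^i(\kappa,N)^\vee\cong\tor_i(\kappa,N^\vee)$ and $\tor_i(\kappa,N)^\vee\cong\ext^i(\kappa,N^\vee)$, so faithful injectivity of $E$ only shows that Ext-vanishing for $N$ is equivalent to Tor-vanishing for $N^\vee$. To conclude the equivalence for $N$ itself you would need to know that $N$ and $N^\vee$ have the same small support at the maximal ideal, which is essentially the statement being proved; passing to the completion and invoking faithful flatness does not repair this circularity. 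The standard proof of the rigidity statement instead uses the Koszul complex $K$ on a generating set of the maximal ideal: one shows by induction on the number of generators that $K\otimes_{R_\p}N$ is exact if and only if $\tor^{R_\p}_*(\kp,N)=0$, and that $\hom_{R_\p}(K,N)$ is exact if and only if $\ext_{R_\p}^*(\kp,N)=0$ (width and depth sensitivity), and then the self-duality $\hom_{R_\p}(K,N)\cong \Sigma^{-n}(K\otimes_{R_\p}N)$ identifies the two conditions. With that replacement your proof of the first assertion, and hence of the other two, goes through.
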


The next  lemma is implicit in~\cite{enochs}.

\begin{lemma}\label{lem141224c}
Let $M$ and $N$ be  $R$-modules, and let $\p,\q\in\spec(R)$ such that $\p\neq\q$.
If $M$ satisfies $t(\p)$ and $N$ satisfies $t(\q)$, then $\Tor iMN=0$ for all $i$.
\end{lemma}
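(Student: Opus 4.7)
The plan is to exploit the fact that an element in the symmetric difference of $\p$ and $\q$ must act in two incompatible ways on $\Tor_i^R(M,N)$: as an isomorphism on one side, and locally nilpotently on the other.

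Since $\p\neq\q$, after interchanging the roles of $M$ and $N$ if necessary, I may fix an element $s\in\q\setminus\p$. By property $t(\p)$(a), multiplication by $s$ is an isomorphism on $M$. Functoriality of $\Tor_i^R(-,N)$ then implies that multiplication by $s$ is an isomorphism on each $\Tor_i^R(M,N)$. On the other hand, property $t(\q)$(b) says that for each $y\in N$ there exists $m\geq 1$ with $\q^m y=0$, and since $s\in\q$ this gives $s^m y=0$. Thus $s$ is locally nilpotent on $N$.

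Next I compute $\Tor$ via a free resolution $F_\bullet\to M$, giving $\Tor_i^R(M,N)=H_i(F_\bullet\otimes_R N)$. Each term $F_k\otimes_R N$ is isomorphic to a direct sum of copies of $N$, on which $s$ acts coordinatewise; since every element of a direct sum is supported on finitely many summands, local nilpotence of $s$ on $N$ propagates to local nilpotence of $s$ on $F_k\otimes_R N$, and hence on the subquotient $\Tor_i^R(M,N)$. Given any $\xi\in\Tor_i^R(M,N)$, choose a cycle representative $z\in F_i\otimes_R N$, pick $m$ with $s^m z=0$, and conclude $s^m\xi=0$. Combined with the fact that $s^m$ acts as an isomorphism on $\Tor_i^R(M,N)$, this forces $\xi=0$.

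The argument is mostly formal; the only point requiring care is the identification of the two actions of $s$ on $\Tor_i^R(M,N)$—the one induced via $M$ (which is an isomorphism) and the one induced via $N$ (which is locally nilpotent)—with the standard action by the ring element $s$. This is a general fact about $\Tor$ of a ring element, but it must be invoked explicitly to close the argument.
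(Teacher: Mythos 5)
Your proposal is correct and follows essentially the same approach as the paper's proof: both pick an element in the symmetric difference of $\p$ and $\q$ and play off the locally nilpotent action on one side against the invertible action on the other. You merely normalize by choosing $s\in\q\setminus\p$ (swapping $M$ and $N$ if needed) whereas the paper chooses $x\in\p\setminus\q$, and you usefully spell out the free-resolution argument that the paper leaves as ``straightforward to show.''
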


\begin{proof}
Assume without loss of generality that $\p\not\subseteq\q$, and let $x\in\p\smallsetminus\q$.
If follows that $x$ is locally nilpotent on $M$ and the map $N\xrightarrow xN$ is an isomorphism.
From this, it is straightforward to show that $x$ is locally nilpotent on $\Tor iMN$ and that
the map $\Tor iMN\xrightarrow{x^n}\Tor iMN$ is an isomorphism for each $n\geq 1$. 
It follows readily that $\Tor iMN=0$.
\end{proof}

\begin{rmk}\label{rmk141224b}
In the setting of Lemma~\ref{lem141224c}, we may not have $\Ext iMN=0$. 
Indeed, assume that $(R,\m,k)$ is local and non-artinian.
Then there is a prime $\p\neq\m$ in $\spec(R)$.
The modules $M=E_R(R/\p)$ and $N=E_R(R/\m)$ satisfy $t(\p)$ and $t(\m)$, respectively,
but we have $\hom_R(M,N)\neq 0$ since $N$ is faithfully injective and $M\neq 0$.
\end{rmk}

\begin{lemma}\label{lem141224a}
Let $M$ be a non-zero $R$-module, and let $\p\in\spec(R)$.
If $M$ satisfies $t(\p)$, then $\supp_R(M)=\{\p\}$.
\end{lemma}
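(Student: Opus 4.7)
The plan is to combine the elimination tool given by Lemma~\ref{lem141224c} with the non-vanishing tool given by Fact~\ref{fact141224a}. Concretely, both inclusions $\supp_R(M)\subseteq\{\p\}$ and $\{\p\}\subseteq\supp_R(M)$ will fall out almost immediately.

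For the inclusion $\supp_R(M)\subseteq\{\p\}$, I would take any prime $\q\neq\p$ and note that, by the remark following Definition~\ref{def:tp}, the residue field $\kappa(\q)=R_{\q}/\q R_{\q}$ satisfies $t(\q)$. Since $M$ satisfies $t(\p)$ by hypothesis and $\p\neq\q$, Lemma~\ref{lem141224c} applies to give $\Tor{i}{\kappa(\q)}{M}=0$ for every $i\geq 0$, so $\q\notin\supp_R(M)$.

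For the reverse inclusion, the point is that $M\neq 0$, so by Fact~\ref{fact141224a} the small support $\supp_R(M)$ is nonempty. Combined with the previous paragraph, the only prime that can lie in $\supp_R(M)$ is $\p$ itself, so $\supp_R(M)=\{\p\}$.

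I do not anticipate any serious obstacle: the $t(\p)$ condition is precisely engineered so that Lemma~\ref{lem141224c} kills Tor against residue fields at other primes, and the only nontrivial input beyond that is the basic fact that a nonzero module has nonempty small support. If one wanted a proof not routed through small support nonemptiness, an alternative would be to exhibit a nonzero element of $\kappa(\p)\otimes_R M$ directly, using that localization at $\p$ is the identity on $M$ (by part (a) of $t(\p)$) and that $\p M\neq M$ (which follows from part (b), since any $x\in M\setminus\p M$ would exist as soon as $M\neq 0$, using Nakayama-type reasoning on the $\p$-locally-nilpotent structure); but this is more delicate and unnecessary given the tools already assembled.
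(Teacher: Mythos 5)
Your main argument is correct and is essentially identical to the paper's proof: nonemptiness of $\supp_R(M)$ via Fact~\ref{fact141224a}, and exclusion of every $\q\neq\p$ via Lemma~\ref{lem141224c} applied to $\kappa(\q)$ (which satisfies $t(\q)$). One caution about your parenthetical alternative, though: the claim that $\p M\neq M$ follows from $t(\p)$ together with $M\neq 0$ is false in general --- for example, $E_R(k)$ over a $1$-dimensional local ring $(R,\m,k)$ satisfies $t(\m)$ and is nonzero, yet $\m E_R(k)=E_R(k)$ --- so exhibiting a nonzero element of $\kappa(\p)\otimes_R M$ cannot work; it is the higher $\Tor{i}{\kappa(\p)}{M}$ that are guaranteed nonzero, which is precisely why routing through Fact~\ref{fact141224a} is the right move.
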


\begin{proof}
Fact~\ref{fact141224a} implies that $\supp_R(M)\neq\emptyset$, so it suffices to show that $\supp_R(M)\subseteq\{\p\}$.
Let $\q\in\spec(R)$ such that $\p\neq \q$; it suffices to show that $\q\notin\supp_R(M)$; that is, that $\Tor i{\kp[q]}M=0$ for all $i$.
Since $\kp[q]$ satisfies $\tp[q]$, this follows from Lemma~\ref{lem141224c}.
\end{proof}

\section{Preliminary Results}
\label{sec150101b}

This section consists of 
useful results about tensor products and Tor-modules for the proofs of our main theorems.
(Note that special cases of some of these results can be found in~\cite{MR3169700}.)
For the first two results, recall the following.
For $i \geq 0$, the  ring $R$ is $(S_i)$ if $\operatorname{depth}(R_\p) \geq \min(\height(\p),i)$ for all $\p \in \spec(R)$. In particular, if $R$ is $(S_1)$, then the associated primes of $R$ are all minimal.
Since $R$ is Cohen-Macaulay if and only if it is $(S_i)$ for all $i \geq 0$, the next two results hold in particular if $R$ is Cohen-Macaulay. 

\begin{lemma}
\label{20141002.2}
Let $R$ be $(S_1)$. If $\p$ is a prime ideal of $R$ such that $\height(\p) \geq 1$ and $T$ is an $R$-module with property $t(\p)$, then for any Gorenstein injective $R$-module $G$ we have $G \otimes_R T = 0$.
\end{lemma}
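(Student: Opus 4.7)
The strategy is to exploit condition (b) of property $t(\p)$ by locating an element $r\in\p$ that is simultaneously a non-zero-divisor on $R$ and acts surjectively on $G$. Once such an $r$ is in hand, for any simple tensor $g\otimes t\in G\otimes_R T$ we can choose $m\geq 1$ with $r^m t=0$ and write $g=r^m g'$ for some $g'\in G$, so that
$$g\otimes t=r^m g'\otimes t=g'\otimes r^m t=0.$$
Since simple tensors generate, this forces $G\otimes_R T=0$.

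For the existence of $r$: because $R$ is $(S_1)$, every associated prime of $R$ has height zero, and the hypothesis $\height(\p)\geq 1$ means $\p$ is not minimal. Any prime contained in a minimal prime must equal it, so $\p$ is not contained in any minimal prime, hence in no associated prime of $R$. Applying prime avoidance to the (finite) set $\operatorname{Ass}(R)$ yields an element $r\in\p$ that avoids every associated prime, i.e., a non-zero-divisor on $R$. Since $r\in\p$, condition $t(\p)$(b) guarantees that for each $t\in T$ there is an $m\geq 1$ with $r^m t=0$.

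For surjectivity of $r$ on $G$: by definition of Gorenstein injective there is a complete injective resolution with $G\cong\operatorname{Im}(E_0\to E^0)$, and in particular a surjection $E_0\twoheadrightarrow G$ from the injective module $E_0$. Since $r$ is a non-zero-divisor on $R$, the injective module $E_0$ is $r$-divisible, so $rE_0=E_0$; pushing through the surjection gives $rG=G$, and iterating yields $r^m G=G$ for every $m\geq 1$. Combining the two steps completes the argument. The proof presents no serious obstacle; the only point requiring a bit of care is the prime-avoidance step that extracts the non-zero-divisor from $\p$, and the observation that Gorenstein injective modules inherit divisibility by non-zero-divisors directly from the surjection off of an ordinary injective module in the complete injective resolution.
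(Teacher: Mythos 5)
Your proof is correct, and the overall skeleton matches the paper's: extract a non-zero-divisor $r\in\p$ via prime avoidance (using $(S_1)$ to ensure $\operatorname{Ass}(R)=\operatorname{Min}(R)$), show multiplication by $r$ is surjective on $G$, and then kill simple tensors using $t(\p)$(b). The one place you diverge is the surjectivity step. The paper observes that $R/(r)$ has projective dimension $1$ and invokes a cited vanishing result to get $\ext^1_R(R/(r),G)=0$, from which the surjectivity of $G\xrightarrow{r}G$ follows by the long exact sequence. You instead use only the fact that a Gorenstein injective module is (by definition, from the complete injective resolution) a homomorphic image of an injective module $E_0$, together with the standard fact that injective modules are divisible by non-zero-divisors, so $rE_0=E_0$ pushes down to $rG=G$. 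Your version is more elementary and self-contained — it avoids the Ext-vanishing machinery entirely and only needs the surjection $E_0\twoheadrightarrow G$ — while the paper's version is a special case of a more general vanishing principle it needs elsewhere. Both are valid; yours is arguably cleaner for this isolated lemma.
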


\begin{proof}
Let $\p \in \spec(R)$ such that $\height(\p) \geq 1$. Since $R$ is $(S_1)$, we have $\operatorname{Ass}(R) = \operatorname{Min}(R)$. Prime avoidance provides an $R$-regular element $r \in \p$. Then $\pd_R(R/(r)) = 1$ with free resolution $$0 \to R \xrightarrow{r} R \to R/(r) \to 0.$$ By~\cite[Lemma 1.3]{balance} we have the vanishing $\ext_R^1(R/(r),G) = 0$, so the sequence $$\hom_R(R,G) \xrightarrow{r} \hom_R(R,G) \to 0$$ is exact, and hence the map $G \xrightarrow{r^n} G$ is a surjection for all $n \geq 1$.

To show that $G \otimes_R T = 0$, it suffices to show that for $x \in G$ and $y \in T$, one has $x \otimes y = 0$. Choose $n \geq 1$ such that $r^ny=0$. The previous paragraph gives an element $x' \in G$ such that $r^nx' = x$, so $$x \otimes y = (r^nx') \otimes y = x' \otimes (r^ny) = 0$$ as desired.
\end{proof}

\begin{lemma}
\label{20141002.3}
Let $R$ be $(S_1)$ and let $D$ be a finitely-generated $R$-module. If $\p \in \spec(R)$ such that $\height(\p) \geq 1$, then for any Gorenstein injective $R$-module $G$ we have $\hom_R(D,E_R(R/\p)) \otimes_R G = 0$.
\end{lemma}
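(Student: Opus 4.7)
The plan is to reduce this to Lemma~\ref{20141002.2} by showing that $\hom_R(D,E_R(R/\p))$ satisfies $t(\p)$. Once we know this, since $\height(\p) \geq 1$, Lemma~\ref{20141002.2} immediately yields $\hom_R(D,E_R(R/\p)) \otimes_R G = 0$ for any Gorenstein injective $G$.

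So the real content is verifying the two conditions of Definition~\ref{def:tp} for $T := \hom_R(D,E_R(R/\p))$. For condition (a), fix $r \in R\setminus \p$. Since $E_R(R/\p)$ satisfies $t(\p)$, the map $E_R(R/\p) \xrightarrow{r} E_R(R/\p)$ is an isomorphism; applying the covariant functor $\hom_R(D,-)$ gives that $T \xrightarrow{r} T$ is an isomorphism. For condition (b), fix $f \in T$. Here is where finite generation of $D$ enters: choose generators $d_1,\ldots,d_n$ of $D$, so that each $f(d_i) \in E_R(R/\p)$. Because $E_R(R/\p)$ satisfies $t(\p)$, there exist integers $m_i \geq 1$ with $\p^{m_i}f(d_i) = 0$. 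Taking $m := \max_i m_i$, one checks by expanding an arbitrary element of $D$ in the generators $d_i$ that $\p^m f = 0$ in $T$, as required.

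With $t(\p)$ verified for $T$, the hypothesis $\height(\p) \geq 1$ allows us to invoke Lemma~\ref{20141002.2} directly on $T$ to conclude $G \otimes_R T = 0$. The only step requiring care is the local nilpotence, and the mild obstacle there is simply to remember that finite generation of $D$ is what converts the pointwise local nilpotence on $E_R(R/\p)$ into a uniform annihilation of $f$ by a power of $\p$; without that hypothesis the argument would fail.
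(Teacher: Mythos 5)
Your proposal is correct and matches the paper's argument: the paper likewise observes that since $D$ is finitely generated and $E_R(R/\p)$ has property $t(\p)$, the module $\hom_R(D,E_R(R/\p))$ also has property $t(\p)$, and then invokes Lemma~\ref{20141002.2}. You simply supply the routine verification of $t(\p)$ that the paper leaves implicit.
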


\begin{proof}
Since $D$ is finitely generated and $E_R(R/\p)$ has property $t(\p)$, we conclude that $\hom_R(D,E_R(R/\p))$ also has property $t(\p)$. Thus, the desired vanishing follows from Lemma~\ref{20141002.2}.
\end{proof}

We use the next lemma to compute certain $\tor$-modules below.

\begin{lemma}
\label{20141030.1}
Let $R$ be a $d$-dimensional Cohen-Macaulay ring with a dualizing module $D$, and let $\p \in \spec(R)$. If $J$ is the minimal injective resolution of $D$ over $R$, then $\hom_R(J,E_R(R/\p))$ is a flat resolution (over $R$ and over $R_\p$) of $\hom_R(D,E_R(R/\p)) \cong \hom_{R_\p}(D_\p,E_{R_\p}(\kappa(\p)))$. Furthermore, 
there is an inequality $\fd_R(\hom_R(D,E_R(R/\p))) \leq \height(\p)$.
\end{lemma}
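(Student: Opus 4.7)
My plan is to use the well-known structure of the minimal injective resolution of a dualizing module together with a Matlis-duality style argument. Since $D$ is dualizing for the $d$-dimensional Cohen-Macaulay ring $R$, a classical Bass number computation gives $J^i\cong\oplus_{\height(\q)=i}E_R(R/\q)$ for $0\leq i\leq d$ and $J^i=0$ otherwise. Applying the exact contravariant functor $\hom_R(-,E_R(R/\p))$ to the augmented resolution $0\to D\to J^0\to\cdots\to J^d\to 0$ produces the exact sequence
$$0\to\hom_R(J^d,E_R(R/\p))\to\cdots\to\hom_R(J^0,E_R(R/\p))\to\hom_R(D,E_R(R/\p))\to 0,$$
so it remains to identify this complex as a flat resolution of the asserted length.

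Next, I would handle the promised isomorphism and the length bound simultaneously by localizing at $\p$. Because $E_R(R/\p)$ is $\p$-local with $E_R(R/\p)\cong E_{R_\p}(\kappa(\p))$, the adjunction $\hom_R(M,E_R(R/\p))\cong\hom_{R_\p}(M_\p,E_{R_\p}(\kappa(\p)))$ applies to $M=D$ (giving the claimed isomorphism) and to each $M=J^i$. Since $D_\p$ is dualizing for the Cohen-Macaulay local ring $R_\p$ of dimension $\height(\p)$, the complex $J_\p$ is the minimal injective resolution of $D_\p$ and vanishes above degree $\height(\p)$. Hence the upper half of our complex collapses, establishing the length bound $\fd_R(\hom_R(D,E_R(R/\p)))\leq\height(\p)$.

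Finally, I would verify flatness of each $\hom_R(J^i,E_R(R/\p))$; I expect this to be the main obstacle. Primes $\q\not\subseteq\p$ contribute nothing, since one has $\hom_R(E_R(R/\q),E_R(R/\p))=0$ whenever $\q\not\subseteq\p$ by picking $r\in\q\smallsetminus\p$ (an argument in the spirit of Lemma~\ref{lem141224c}, observing that $r$ is locally nilpotent on the source and an isomorphism on the target). After this reduction, one obtains
$$\hom_R(J^i,E_R(R/\p))\cong\prod_{\substack{\q\subseteq\p\\ \height(\q)=i}}\hom_{R_\p}(E_{R_\p}(\kappa(\q)),E_{R_\p}(\kappa(\p))).$$
The key input is the classical fact (going back to Matlis, and developed by Enochs and Xu) that over a local ring the Matlis dual of an indecomposable injective $E_{R_\p}(\kappa(\q))$ is isomorphic to the $\q R_\p$-adic completion of $(R_\p)_{\q R_\p}$, and hence is flat. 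Since $R$ is noetherian, arbitrary products of flat modules remain flat, and flatness over $R_\p$ transfers to flatness over $R$ via the flat structure map $R\to R_\p$, completing the argument.
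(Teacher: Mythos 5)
Your proposal follows essentially the same route as the paper's proof: apply the exact functor $\hom_R(-,E_R(R/\p))$ to the augmented minimal injective resolution ${}^+J$ of $D$, use the vanishing $\hom_R(E_R(R/\q),E_R(R/\p))=0$ for $\q\not\subseteq\p$ to truncate the complex at degree $\height(\p)$, invoke flatness of each term together with the noetherian hypothesis (so products of flats are flat), and then pass through $E_R(R/\p)\cong E_{R_\p}(\kappa(\p))$ and hom--tensor adjointness to obtain the local description. The only notable divergence is in how you justify flatness of $\hom_R(E_R(R/\q),E_R(R/\p))$. The paper simply cites the standard reference (Enochs--Jenda, \emph{Relative Homological Algebra}, around Theorem~3.3.8), whereas you appeal to an explicit Matlis-style structural description. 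That description is slightly imprecise: what is true (and what Enochs--Jenda prove) is that for $\q\subseteq\p$ the module $\hom_R(E_R(R/\q),E_R(R/\p))$ is the $\q R_\q$-adic completion of a \emph{free} $R_\q$-module of possibly large rank, not of $R_\q$ itself. This does not affect your conclusion, since such completions are flat regardless of rank; but if you want to avoid the structure theorem entirely, the cleanest argument is the character-module computation: for $M$ finitely generated with a finite free resolution $P_\bullet$, one has $\tor_i^R(\hom_R(I,E),M)\cong\hom_R(\ext_R^i(M,I),E)$, which vanishes for $i>0$ whenever $I$ is injective, giving flatness of $\hom_R(I,E)$ for any pair of injectives over a noetherian ring. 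Either way, your argument is correct.
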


\begin{proof}
Consider the augmented minimal injective resolution 
$${^+}J := \quad 0 \to D \to \bigoplus_{\height(\q)=0} E(R/\q) \to \bigoplus_{\height(\q)=1} E(R/\q) \to \cdots \to \bigoplus_{\height(\q)=d} E(R/\q) \to 0$$ 
of $D$. Since $\hom_R(E_R(R/\p),E_R(R/\q)) \neq 0$ if and only if $\p \subset \q$ by~\cite[Theorem 3.3.8(5)]{rha} and $\q \not\subset \p$ for $\height(\q) > \height(\p)$, we have the following exact sequence:
\begin{multline*}
\hom({^+}J,E(R/\p)) = \quad 0 \to \prod_{\substack{\height(\q)=\\\height(\p)}}\hom(E(R/\q),E(R/\p)) \to \cdots \\ 
\cdots \to \prod_{\height(\q)=0}\hom(E(R/\q),E(R/\p)) \to \hom(D,E(R/\p)) \to 0
\end{multline*}
Since $R$ is noetherian and each $\hom_R(E_R(R/\q),E_R(R/\p))$ is flat, we conclude that $\hom_R(J,E_R(R/\p))$ is a flat resolution of $\hom_R(D,E_R(R/\p))$. It follows immediately that $\fd_R(\hom_R(D,E_R(R/\p))) \leq \height(\p)$.

For any $\q \in \spec(R)$ we have the isomorphisms
\begin{eqnarray*}
\hom_R(E_R(R/\q),E_R(R/\p)) &\cong& \hom_R(E_R(R/\q),\hom_{R_\p}(R_\p,E_R(R/\p))) \\
&\cong& \hom_{R_\p}(R_\p \otimes_R E_R(R/\q),E_R(R/\p)) \\
&\cong& \hom_{R_\p}(E_R(R/\q)_\p,E_{R_\p}(\kappa(\p)))
\end{eqnarray*}
that follow from standard localization properties and the isomorphism $E_R(R/\p) \cong E_{R_\p}(\kappa(\p))$, as in~\cite[Theorem 18.4]{matsumura}. As $E_R(R/\q)$ is injective over $R$, the module $E_R(R/\q)_\p$ is injective over $R_\p$ by~\cite[Proposition 3.3.2]{rha}. We conclude that $\hom_{R_\p}(E_R(R/\q)_\p,E_{R_\p}(\kappa(\p)))$ is flat over $R$ and over $R_\p$. Similarly, one has
$$\hom_R(D,E_R(R/\p)) \cong \hom_{R_\p}(D_\p,E_{R_\p}(\kappa(\p)))$$ 
and $D_\p$ is a dualizing $R_\p$-module with minimal injective resolution $J_\p$.
Thus, we have $\hom_R(J,E_R(R/\p)) \cong \hom_{R_\p}(J_\p,E_{R_\p}(\kappa(\p)))$, 
and this complex is a flat resolution  of $\hom_{R_\p}(D_\p,E_{R_\p}(\kappa(\p)))$ over $R_\p$.
\end{proof}

The next few results give  useful  descriptions of certain Tor-modules.

\begin{proposition}
\label{prop141231b}
Let $R$ be Cohen-Macaulay ring with a dualizing module $D$, and let $\p\in \spec(R)$ with $h=\height(\p)$. Given an $R$-module $M$, for all $k$ we have
$$\tor_k^R(\hom_R(D,E_R(R/\p)),M) \cong H^{h-k}_\p(M_\p)\cong H^{h-k}_{\p R_{\p}}(M_{\p}).$$ 
\end{proposition}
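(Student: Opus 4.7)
The plan is in two stages: reduce to the local case at $\p$, then identify the Tor via the Cech complex. The second isomorphism $H^{h-k}_\p(M_\p)\cong H^{h-k}_{\p R_\p}(M_\p)$ is immediate, since the torsion functors $\Gamma_\p$ and $\Gamma_{\p R_\p}$ coincide on $R_\p$-modules and hence so do their right derived functors.

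For the main isomorphism, I would use the identification $\hom_R(D,E_R(R/\p))\cong\hom_{R_\p}(D_\p,E_{R_\p}(\kp))$ from the proof of Lemma~\ref{20141030.1}, which makes this module a natural $R_\p$-module. Writing $A$ for this module, a base-change argument---taking a free resolution $P_\bullet\to M$ over $R$ and using that $A\otimes_R P_\bullet\cong A\otimes_{R_\p}(P_\bullet)_\p$, with $(P_\bullet)_\p$ a free $R_\p$-resolution of $M_\p$---yields $\Tor{k}{A}{M}\cong\tor_k^{R_\p}(A,M_\p)$. This reduces us to showing that for $(R,\m)$ local Cohen-Macaulay of dimension $d$ with dualizing module $D$ and $N$ any $R$-module, $\Tor{k}{\hom_R(D,E_R(k))}{N}\cong H^{d-k}_\m(N)$.

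In the local setting, local duality applied to $R$ itself gives $\hom_R(D,E_R(k))\cong H^d_\m(R)$, using the CM vanishing $H^i_\m(R)=0$ for $i<d$. Fix a system of parameters $x_1,\dots,x_d$ for $\m$; the Cech complex $\check C^\bullet(\underline x;R)$ is a bounded complex of flat $R$-modules in cohomological degrees $0$ through $d$, with cohomology concentrated in degree $d$ (by CM) and equal to $H^d_\m(R)$. After reversing indices, it becomes a flat resolution of $H^d_\m(R)$, whence
$$\Tor{k}{\hom_R(D,E_R(k))}{N}\cong\Tor{k}{H^d_\m(R)}{N}\cong H^{d-k}\bigl(\check C^\bullet(\underline x;R)\otimes_R N\bigr)\cong H^{d-k}_\m(N).$$
The main obstacle is the identification $\hom_R(D,E_R(k))\cong H^d_\m(R)$, which depends on local duality and carries the bulk of the genuine content; the Cech-complex calculation and the base-change reduction to the local case are routine.
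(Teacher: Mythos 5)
Your argument is correct and follows essentially the same route as the paper: reduce to the local case, then in the local case use local duality to identify $\hom_R(D,E_R(k))\cong H^d_\m(R)$ and use the reindexed \v{C}ech complex (acyclic except in degree $d$ by Cohen--Macaulayness) as a flat resolution to compute the Tor. The only cosmetic difference is in the reduction step: you take a free resolution of $M$ and localize, whereas the paper resolves $\hom_R(D,E_R(R/\p))$ by the flat complex $\hom_R(J,E_R(R/\p))$ supplied by Lemma~\ref{20141030.1}; both are standard base-change arguments and yield the same chain of isomorphisms.
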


\begin{proof}
Assume first that $R$ is local with maximal ideal $\m=\p$, and set $E=E_R(R/\m)$.
Let $x_1,\ldots,x_h\in\m$ be a system of parameters for $R$.
Recall that the ``\v{C}ech complex''
$$C=\quad 0\to R\to\bigoplus_iR_{x_i}\to\bigoplus_{i<j}R_{x_ix_j}\to\cdots\to R_{x_1\cdots x_h}\to 0$$
concentrated in cohomological degrees 0 to $h$, computes the local cohomology of $R$; see, e.g.~\cite[Section 3.5]{bruns:cmr}.
Grothendieck's Local Duality Theorem~\cite[Theorem 3.5.8]{bruns:cmr} implies that $D^\vee\cong H^h_{\m}(R)$ and $H^i_\m(R)=0$ for all $i\neq d$.
In other words, by shifting $C$ to \emph{homological} degrees 0 to $h$, we obtain a flat resolution $F$ of $D^\vee$.
It follows that
$\tor_k^R(\hom_R(D,E),M)
\cong H_k(F\otimes_RM)$.
Note that we have
\begin{align*}
F\otimes_RM
&\cong\quad 0\to M\to\bigoplus_iM_{x_i}\to\bigoplus_{i<j}M_{x_ix_j}\to\cdots\to M_{x_1\cdots x_d}\to 0.
\end{align*}
We conclude that
\begin{align*}
\tor_k^R(\hom(D,E),M)
&\cong H_k(F\otimes_RM)\\
&\cong H_{k-h}(C\otimes_RM)\\
&\cong H^{h-k}(C\otimes_RM)\\
&\cong H^{h-k}_\m(M)
\end{align*} 
as desired.

For the general case, let $J$ be an injective resolution of $D$ over $R$.
Then we have
\begin{align*}
\tor_k^R(\hom_R(D,E_R(R/\p)),M) 
&\cong H_k(\hom(J,E_R(R/\p)) \otimes_R M) \\
&\cong H_k(\hom_R(J,E_R(R/\p)) \otimes_{R_\p} M_\p) \\
&\cong \tor_k^{R_\p}(\hom_R(D,E_R(R/\p)),M_\p) \\
&\cong \tor_k^{R_\p}(\hom_{R_\p}(D_\p,E_{R_\p}(\kappa(\p))),M_\p)\\
&\cong H^{h-k}_\p(M_\p)\\
&\cong H^{h-k}_{\p R_{\p}}(M_{\p})
\end{align*}
where the fifth isomorphism is from the local case, and the last isomorphism is standard.
\end{proof}

In light of Proposition~\ref{prop141231b}, the next few results also yield isomorphisms and vanishing  for modules of the form
$H^i_{\p}(M_{\p})$ and $H^i_{\p R_{\p}}(M_{\p})$.

\begin{proposition}
\label{prop141231a}
Let $R$ be Cohen-Macaulay with a dualizing module $D$, and let $\p,\q \in \spec(R)$. Then we have
$$\tor_k^R(\hom_R(D,E_R(R/\p)),E_R(R/\q)) \cong
\begin{cases}0&\text{if $\p\neq \q$ or if $k\neq \height(\p)$}\\ E_R(R/\p)&\text{if $\p= \q$ and $k= \height(\p)$.}\end{cases}
$$ 
\end{proposition}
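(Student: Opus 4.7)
The plan is to deduce the proposition from Proposition~\ref{prop141231b}, which provides the isomorphism
$$\tor_k^R(\hom_R(D,E_R(R/\p)),E_R(R/\q))\cong H^{h-k}_{\p R_\p}(E_R(R/\q)_\p),$$
where $h:=\height(\p)$. It then suffices to compute these local cohomology modules case by case.

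First I would handle the case $\p=\q$. Here $E_R(R/\p)_\p\cong E_{R_\p}(\kp)$ as noted in the proof of Lemma~\ref{20141030.1}. This module is injective over $R_\p$ (via~\cite[Proposition 3.3.2]{rha}, as in that proof), and by Fact~\ref{fact141231a} applied to the $t(\p)$ property of $E_R(R/\p)$, it coincides with its own $\p R_\p$-torsion submodule. Hence $\Gamma_{\p R_\p}(E_R(R/\p)_\p)=E_R(R/\p)$, while injectivity over $R_\p$ makes $E_{R_\p}(\kp)$ acyclic for $\Gamma_{\p R_\p}$, so $H^i_{\p R_\p}(E_R(R/\p)_\p)=0$ for $i>0$. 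The claimed isomorphism then holds precisely when $h-k=0$.

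Next I would handle $\p\neq\q$ by exhibiting a single element that acts invertibly in a useful way. If $\q\not\subseteq\p$, I pick $r\in\q\setminus\p$: the $t(\q)$ property makes $r$ locally nilpotent on $E_R(R/\q)$, while $r$ is a unit in $R_\p$, so a direct calculation forces $E_R(R/\q)_\p=0$ and every local cohomology module vanishes. If instead $\q\subsetneq\p$, I pick $r\in\p\setminus\q$: the $t(\q)$ property makes multiplication by $r$ an isomorphism on $E_R(R/\q)$, hence also on $E_R(R/\q)_\p$. The main step is then the general observation that an element of $\p R_\p$ acting invertibly on an $R_\p$-module $N$ forces $H^i_{\p R_\p}(N)=0$ for all $i$. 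This follows from the identification $H^i_{\p R_\p}(N)\cong\varinjlim_n\ext_{R_\p}^i(R_\p/(\p R_\p)^n,N)$, since each Ext-module is simultaneously annihilated by $r^n$ (coming from the first variable) and acted on invertibly by $r^n$ (coming from the second variable).

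I do not anticipate substantial obstacles. The main technical point is the invertible-element vanishing of local cohomology just described; one could alternatively argue via the \v{C}ech complex with $r$ taken as one of the generators of an ideal cofinal with $\p R_\p$, but the Ext-based argument is the most self-contained given what is already in the paper.
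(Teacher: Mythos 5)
Your proof is correct and follows the same high-level route as the paper: both arguments reduce the statement to a local-cohomology computation via Proposition~\ref{prop141231b} and handle the diagonal case $\p=\q$ by observing that $E_{R_\p}(\kp)$ is an injective, $\p R_\p$-torsion $R_\p$-module. The difference is in how the off-diagonal case $\p\neq\q$ is dispatched. The paper invokes Lemma~\ref{lem141224c} directly on the Tor-modules: since $\hom_R(D,E_R(R/\p))$ has property $t(\p)$ and $E_R(R/\q)$ has property $t(\q)$, all Tor-modules vanish in one stroke, with no need to split into $\q\not\subseteq\p$ and $\q\subsetneq\p$. Your alternative works at the level of $H^{h-k}_{\p R_\p}(E_R(R/\q)_\p)$: in the subcase $\q\not\subseteq\p$ you show the localization itself vanishes, and in the subcase $\q\subsetneq\p$ you run an invertible-element argument through the colimit description $H^i_{\p R_\p}(N)\cong\varinjlim_n\ext^i_{R_\p}(R_\p/(\p R_\p)^n,N)$. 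Both are sound; the paper's version is shorter and uses machinery already assembled, while yours is more self-contained at the local-cohomology level (at the cost of invoking the Ext-colimit description of local cohomology, which the paper never states explicitly, and of an additional case split). The substance is the same.
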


\begin{proof}
Proposition~\ref{prop141231b} shows that we may assume without loss of generality that
$R$ is local and $\p = \m$ is the unique maximal ideal. Set $d:=\dim(R)=\height(\p)$.

If $\m \neq \q$, then the desired vanishing is from Lemma~\ref{lem141224c}, since $E_R(R/\q)$ satisfies $\tp[q]$
and $\hom_R(D,E_R(R/\p))$ satisfies $\tp$.
Thus, we assume that $\p = \q = \m$, and set $E:=E_R(R/\m)$.
From Proposition~\ref{prop141231b}, 
we have the first isomorphism in the next sequence.
$$\tor_k^R(\hom_R(D,E),E)
\cong H^{d-k}_\m(E)
\cong
\begin{cases}
0&\text{if $d-k\neq 0$, i.e., $k\neq\height(\p)$}\\
E&\text{if $d-k= 0$, i.e., $k=\height(\p)$}
\end{cases}
$$
The second step follows from the fact that $E$ is $\m$-torsion and injective.
\end{proof}

\begin{proposition}
\label{20141009.1}
Let $R$ be Cohen-Macaulay with a dualizing module $D$, and let $\p \in \spec(R)$. Then for each injective $R$-module $E$, 
there is a set $\mu_\p$ such that
$$\tor_k^R(\hom_R(D,E_R(R/\p)),E) 
\cong\begin{cases}0&\text{if  $k\neq \height(\p)$}\\ E_R(R/\p)^{(\mu_p)}&\text{if $k= \height(\p)$.}\end{cases}
$$ 
\end{proposition}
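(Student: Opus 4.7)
The plan is to reduce the claim to Proposition~\ref{prop141231a} by writing $E$ as a direct sum of injective hulls via Matlis' decomposition theorem, and then moving the Tor past the direct sum.

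More concretely, first I would invoke Matlis' structure theorem to write
$$E\cong\bigoplus_{\q\in\spec(R)}E_R(R/\q)^{(\mu_{\q})}$$
for some index sets $\mu_{\q}$. Second, since $\hom_R(D,E_R(R/\p))$ has a bounded flat resolution by Lemma~\ref{20141030.1} (or, more generally, since tensor products commute with arbitrary direct sums and homology commutes with direct sums), Tor in the second variable commutes with arbitrary coproducts, giving
$$\tor_k^R(\hom_R(D,E_R(R/\p)),E)\cong\bigoplus_{\q\in\spec(R)}\tor_k^R(\hom_R(D,E_R(R/\p)),E_R(R/\q))^{(\mu_{\q})}.$$
Third, I would apply Proposition~\ref{prop141231a} summand-by-summand: every term with $\q\neq\p$ vanishes, and the same holds for every term with $k\neq\height(\p)$; the only surviving contribution is the term with $\q=\p$ and $k=\height(\p)$, which equals $E_R(R/\p)^{(\mu_{\p})}$.

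There is no real obstacle here; the statement is essentially a bookkeeping consequence of Proposition~\ref{prop141231a} combined with the standard fact that injective modules over noetherian rings decompose as in Matlis' theorem. The only point that deserves a moment's care is the commutation of $\tor_k^R(\hom_R(D,E_R(R/\p)),-)$ with arbitrary direct sums, but this is immediate from Lemma~\ref{20141030.1}, which supplies a bounded flat resolution of $\hom_R(D,E_R(R/\p))$, so that $\tor$ is computed by tensoring a fixed complex of flat modules with the direct sum, and tensor products and homology both commute with direct sums.
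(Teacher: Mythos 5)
Your proposal matches the paper's proof essentially verbatim: decompose $E$ via Matlis' theorem, pass Tor through the direct sum, and apply Proposition~\ref{prop141231a} termwise. The only difference is that you spell out the justification for Tor commuting with coproducts (via the bounded flat resolution from Lemma~\ref{20141030.1}), which the paper treats as standard.
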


\begin{proof}
Recall that we may express $E$ as a direct sum $E \cong \bigoplus_\q E_R(R/\q)^{(\mu_\q)}$ indexed over  $\spec(R)$; since $\tor_k^R(-,\bigoplus_\q E_R(R/\q)^{(\mu_\q)}) \cong \bigoplus_{\q}\tor_k^R(-,E_R(R/\q))^{(\mu_\q)}$, 
the desired result follows from Proposition~\ref{prop141231a}.
\end{proof}

\begin{corollary}
\label{20141010.1}
Let $R$ be Cohen-Macaulay with a dualizing module $D$. If $E$ and $E'$ are injective $R$-modules, then $\tor_i^R(\hom_R(D,E'),E)$ is injective for all $i \geq 0$.
\end{corollary}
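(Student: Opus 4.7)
The plan is to reduce to the case where $E'$ is an injective hull of a residue field, where Proposition~\ref{20141009.1} applies directly. Since $R$ is noetherian, Matlis' theorem decomposes $E'$ as $E' \cong \bigoplus_{\p \in \spec(R)} E_R(R/\p)^{(\lambda_\p)}$ for some index sets $\lambda_\p$. Because $D$ is finitely generated and $R$ is noetherian, the functor $\hom_R(D,-)$ commutes with arbitrary direct sums, so
$$\hom_R(D,E') \cong \bigoplus_{\p \in \spec(R)} \hom_R(D,E_R(R/\p))^{(\lambda_\p)}.$$

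Next, I would use that $\tor_i^R(-,E)$ commutes with direct sums, obtaining
$$\tor_i^R(\hom_R(D,E'),E) \cong \bigoplus_{\p \in \spec(R)} \tor_i^R(\hom_R(D,E_R(R/\p)),E)^{(\lambda_\p)}.$$
By Proposition~\ref{20141009.1}, each summand $\tor_i^R(\hom_R(D,E_R(R/\p)),E)$ is either zero (when $i \neq \height(\p)$) or a direct sum of copies of $E_R(R/\p)$ (when $i = \height(\p)$). In either case, each summand is an injective $R$-module.

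Finally, since $R$ is noetherian, arbitrary direct sums of injective $R$-modules are injective (by Bass' theorem), so $\tor_i^R(\hom_R(D,E'),E)$ is itself injective for every $i \geq 0$. The main obstacle is essentially bookkeeping: nothing here is deep beyond Proposition~\ref{20141009.1} and the standard noetherian facts that finitely presented Hom and Tor commute with direct sums and that direct sums of injectives are injective. This routine character is what makes the statement a corollary rather than a theorem.
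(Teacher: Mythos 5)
Your proof is correct and follows the same route as the paper: decompose $E'$ via Matlis' theorem, use that $D$ finitely generated makes $\hom_R(D,-)$ commute with direct sums, reduce to the case of injective hulls of prime quotients, apply Proposition~\ref{20141009.1}, and conclude by Bass' theorem that direct sums of injectives over a noetherian ring are injective. The paper's proof is just a terser version of the same reduction.
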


\begin{proof}
Since $D$ is finitely generated over $R$, it suffices to show that the $R$-module
$\tor_i^R(\hom_R(D,E_R(R/\p)),E_R(R/\q))$ is injective for all prime ideals $\p,\q$ and for all $i \geq 0$. 
The result in this case follows from Proposition~\ref{20141009.1}.
\end{proof}

The next result generalizes the vanishing part of Proposition~\ref{20141009.1}.

\begin{proposition}
\label{20141003.2}
Let $R$ be Cohen-Macaulay with a dualizing module $D$, and let $\p \in \spec(R)$. Then for any Gorenstein injective $R$-module $G$ we have 
$$\tor_k^R(\hom_R(D,E_R(R/\p)),G) = 0$$ 
for all $k \neq \height(\p)$.
\end{proposition}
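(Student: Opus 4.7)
The plan is to split into two cases based on whether $k>h:=\height(\p)$ or $k<h$, reducing in each case to results already established.

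For $k>h$, the result is immediate from Lemma~\ref{20141030.1}: since $\fd_R(\hom_R(D,E_R(R/\p)))\leq h$, any $\tor$-module of degree exceeding $h$ vanishes, regardless of the second argument.

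For $k<h$ (a case that presupposes $h\geq 1$), I would use the complete injective resolution
$$\cdots\to E_1\to E_0\to E^0\to E^1\to\cdots$$
defining $G$. Setting $G_0:=G$ and $G_{i+1}:=\ker(E_i\to E_{i-1})$ (with $E_{-1}:=E^0$) for $i\geq 0$, each $G_i$ is Gorenstein injective, and one obtains short exact sequences $0\to G_{i+1}\to E_i\to G_i\to 0$. Setting $X:=\hom_R(D,E_R(R/\p))$, the associated long exact sequences in Tor take the form
$$\tor_k^R(X,E_i)\to\tor_k^R(X,G_i)\to\tor_{k-1}^R(X,G_{i+1})\to\tor_{k-1}^R(X,E_i).$$
Proposition~\ref{20141009.1} shows that $\tor_j^R(X,E_i)=0$ whenever $j\neq h$; because $0\leq j\leq k<h$ for all indices appearing in the successive shifts, the flanking terms vanish at every stage. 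Iterating $k$ times yields
$$\tor_k^R(X,G)\cong\tor_{k-1}^R(X,G_1)\cong\cdots\cong\tor_0^R(X,G_k)=X\otimes_R G_k.$$
Now $G_k$ is Gorenstein injective, $R$ is Cohen-Macaulay hence $(S_1)$, and $\height(\p)\geq 1$, so Lemma~\ref{20141002.3} gives $X\otimes_R G_k=0$.

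The main obstacle is bookkeeping: one must carefully verify that every Tor degree that appears during the shift (namely $k,k-1,\ldots,0$) is different from $h$, so that Proposition~\ref{20141009.1} applies uniformly. Since $k<h$, all these degrees lie strictly below $h$, and the iteration goes through without complication. No new ideas beyond dimension shifting along the complete resolution are needed.
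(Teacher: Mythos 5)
Your proof is correct and follows essentially the same route as the paper's: the $k>\height(\p)$ case via the flat dimension bound from Lemma~\ref{20141030.1}, and the $k<\height(\p)$ case by dimension shifting along the left half of a complete injective resolution, using Proposition~\ref{20141009.1} to kill the flanking Tor terms and Lemma~\ref{20141002.3} to land at $X\otimes_R G_k=0$. The only cosmetic difference is that the paper phrases the shift as an induction on $k$ using a single short exact sequence $0\to H\to E\to G\to 0$, whereas you unroll the same induction explicitly along the syzygy modules $G_i$; the content is identical.
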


\begin{proof}
We have by Lemma~\ref{20141030.1} that $\fd_R(\hom_R(D,E_R(R/\p))) \leq \height(\p)$, so for $k > \height(\p)$ we have the vanishing $\tor_k^R(\hom_R(D,E_R(R/\p)),G) = 0$. We show the vanishing when $k < \height(\p)$ by induction on $k$.

For the base case $k = 0$, we have $$\tor_0^R(\hom_R(D,E_R(R/\p)),G) = \hom_R(D,E_R(R/\p)) \otimes_R G = 0$$ by Lemma~\ref{20141002.3} provided that $\height(\p) \geq 1$.

For the induction step, observe that there is an exact sequence $$0 \to H \to E \to G \to 0$$ for some injective $R$-module $E$ and Gorenstein injective $R$-module $H$. The long exact sequence in the functor $\tor^R(\hom_R(D,E_R(R/\p)),-)$ gives rise to the exact sequence 
\begin{multline*}
\tor_k^R(\hom_R(D,E_R(R/\p)),E) \to \tor_k^R(\hom_R(D,E_R(R/\p)),G) \\ \to \tor_{k-1}^R(\hom_R(D,E_R(R/\p)),H)
\end{multline*}
 where, by our induction hypothesis, we have $\tor_{k-1}^R(\hom_R(D,E_R(R/\p)),H) = 0$ since $H$ is Gorenstein injective. We have $\tor_k^R(\hom_R(D,E_R(R/\p)),E) = 0$ 
 by Proposition~\ref{20141009.1}, so $\tor_k^R(\hom_R(D,E_R(R/\p)),G) = 0$ as desired.
\end{proof}

\begin{corollary}
\label{20141010.2}
Let $R$ be Cohen-Macaulay with a dualizing module $D$. 
Given an exact sequence $0 \to G' \to G \to G'' \to 0$ of Gorenstein injective $R$-modules and an injective $R$-module $E$, the sequence $$0 \to \tor_k^R(\hom(D,E),G') \to \tor_k^R(\hom(D,E),G) \to \tor_k^R(\hom(D,E),G'') \to 0$$ is exact for all $k \geq 0$.
\end{corollary}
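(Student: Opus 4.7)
The plan is to reduce the question, via additivity of Tor in the injective variable, to the case $E = E_R(R/\p)$ for a single prime $\p$, and then read the claim off from the long exact sequence in Tor together with the vanishing from Proposition~\ref{20141003.2}.

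First I would decompose the injective module as $E \cong \oplus_\p E_R(R/\p)^{(\mu_\p)}$ indexed by $\spec(R)$, and use the fact that $\hom_R(D,-)$ commutes with arbitrary direct sums (since $D$ is finitely generated) together with the fact that $\tor_k^R(-,\text{-})$ commutes with direct sums in each argument. This reduces the problem to showing that for each $\p \in \spec(R)$, the sequence
\[
0 \to \tor_k^R(\hom_R(D,E_R(R/\p)),G') \to \tor_k^R(\hom_R(D,E_R(R/\p)),G) \to \tor_k^R(\hom_R(D,E_R(R/\p)),G'') \to 0
\]
is exact; taking direct sums of these short exact sequences will then yield the general statement, since arbitrary direct sums of short exact sequences are exact.

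Next I would fix $\p$ and set $h := \height(\p)$, and write out the long exact sequence in $\tor^R(\hom_R(D,E_R(R/\p)),-)$ arising from $0 \to G' \to G \to G'' \to 0$. Since $G'$, $G$, and $G''$ are each Gorenstein injective, Proposition~\ref{20141003.2} implies that every term in this long exact sequence in homological degree $j \neq h$ vanishes. Therefore, for $k \neq h$ all three terms in the candidate short exact sequence are zero, and for $k = h$ the long exact sequence locally has the form
\[
0 = \tor_{h+1}^R(\hom_R(D,E_R(R/\p)),G'') \to \tor_h^R(\hom_R(D,E_R(R/\p)),G') \to \tor_h^R(\hom_R(D,E_R(R/\p)),G) \to \tor_h^R(\hom_R(D,E_R(R/\p)),G'') \to \tor_{h-1}^R(\hom_R(D,E_R(R/\p)),G') = 0,
\]
which is exactly the desired short exact sequence.

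There is no real obstacle here; the entire argument is essentially a packaging of Proposition~\ref{20141003.2}. The only mild care needed is to justify the interchange of Tor with the (possibly infinite) direct sum $\oplus_\p E_R(R/\p)^{(\mu_\p)}$ after applying $\hom_R(D,-)$, which is fine because $D$ is finitely generated so $\hom_R(D,-)$ commutes with direct sums, and Tor commutes with direct sums in each slot.
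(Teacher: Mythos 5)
Your proof is correct and follows essentially the same route as the paper: decompose $E$ into indecomposable injectives, reduce to $E = E_R(R/\p)$ via additivity of $\tor$ and $\hom_R(D,-)$, and then extract the short exact sequence from the long exact sequence in $\tor^R(\hom_R(D,E_R(R/\p)),-)$ using the vanishing in Proposition~\ref{20141003.2}. You spell out the reduction and the two cases $k \neq \height(\p)$ versus $k = \height(\p)$ a bit more explicitly than the paper does, but the argument is the same.
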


\begin{proof}
It suffices to show the result when $E := E_R(R/\p)$ for some fixed prime $\p$ such that $\height{\p} = k$ by Proposition~\ref{20141003.2}. Consider
part of the long exact sequence in $\tor^R(\hom_R(D,E),-)$:
\begin{multline*}
\tor_{k+1}^R(\hom(D,E),G'') \to \tor_k^R(\hom(D,E),G') \to \tor_k^R(\hom(D,E),G) \\ \to \tor_k^R(\hom(D,E),G'') \to 
\tor_{k-1}^R(\hom(D,E),G').
\end{multline*}
By Proposition~\ref{20141003.2}, this sequence is short exact.
\end{proof}

The next result augments another part of Proposition~\ref{20141009.1}.

\begin{proposition}
\label{20141010.3}
Let $R$ be Cohen-Macaulay with a dualizing module $D$. If $G$ is a Gorenstein injective $R$-module and $E$ is any injective $R$-module, then 
the module $\tor_k^R(\hom_R(D,E),G)$ is Gorenstein injective for all $k \geq 0$.
\end{proposition}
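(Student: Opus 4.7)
Write $T := \hom_R(D,E)$. The plan is to realize $\tor_k^R(T,G)$ at the end of a bounded exact sequence whose middle terms are Gorenstein injective, setting up Lemma~\ref{lem141223a}\eqref{lem141223a2}, and then to verify $\tor_k^R(T,G) \in \bd$, which via Fact~\ref{fact141223c} supplies the finite Gorenstein injective dimension hypothesis that lemma requires.

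For the first step, I would fix a complete injective resolution $X\colon \cdots \to E_1 \to E_0 \to E^0 \to E^1 \to \cdots$ of $G$; its syzygies are all Gorenstein injective by the standard dimension shift. Writing $G_1 := \ker(E_0 \to E^0)$ and $G_{j+1} := \ker(E_j \to E_{j-1})$ for $j \geq 1$, I would apply Corollary~\ref{20141010.2} to the short exact sequences $0 \to G_{j+1} \to E_j \to G_j \to 0$ and splice the results, producing an exact sequence
\[
\cdots \to \tor_k^R(T,E_1) \to \tor_k^R(T,E_0) \to \tor_k^R(T,G) \to 0
\]
whose non-terminal terms are injective by Corollary~\ref{20141010.1}. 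Truncating at stage $d := \dim R$ then gives an exact sequence of precisely the shape Lemma~\ref{lem141223a}\eqref{lem141223a2} needs.

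For the second step, both $G \in \bd$ (since $\gid_R(G) = 0$ and Fact~\ref{fact141223c}) and $\fd_R T \leq d$ (Lemma~\ref{20141030.1}) are in hand. I would then prove by induction on $\fd_R T$ that $\tor_k^R(T,M) \in \bd$ for every $M \in \bd$ and every $k \geq 0$: the base case ($T$ flat) reduces to the direct verification that $F \otimes_R M \in \bd$ whenever $F$ is flat and $M \in \bd$, exploiting the finite presentation of $D$ to commute $\hom_R(D,-)$ with $F \otimes_R -$; the inductive step uses a flat cover $0 \to T' \to F \to T \to 0$ with $\fd_R T' < \fd_R T$ together with the two-of-three property of $\bd$ applied to the long exact Tor sequence. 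The hard part will be the inductive step at low Tor indices---specifically $k \in \{0,1\}$---where the long exact Tor sequence does not split into short exact sequences among known $\bd$-objects and two-of-three leaves the $\bd$-memberships of $T \otimes_R M$ and $\tor_1^R(T,M)$ coupled; I expect to break this coupling by verifying the three Bass conditions for $T \otimes_R M$ directly, exploiting the special form $T = \hom_R(D,E)$ together with $E \in \bd$.
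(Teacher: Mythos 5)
Your Step 1 is essentially the paper's: reduce to the left half of a complete injective resolution, split it into short exact sequences of Gorenstein injectives, apply Corollary~\ref{20141010.2}, splice, and truncate to set up Lemma~\ref{lem141223a}\eqref{lem141223a2}. One small omission: the paper first reduces to $E = E_R(R/\p)$ using the fact that Gorenstein injective modules are closed under direct sums (citing \cite[Theorem 6.10]{3danes}). You skip this reduction, and it turns out to matter for Step~2.

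Step~2 is where there is a genuine gap, and it is exactly the one you flag. You try to prove a stronger statement --- that $\Tor k{T}{M}\in\bd$ for \emph{every} $M\in\bd$ --- by induction on $\fd_R T$, and you correctly observe that the long exact Tor sequence for $0\to T'\to F\to T\to 0$ leaves $T\otimes_R M$ and $\Tor 1{T}{M}$ coupled. You then write that you ``expect'' to break the coupling by checking the Bass conditions for $T\otimes_R M$ by hand, but you do not supply that verification, and it is not clear how to do it: writing $M\cong D\otimes_R\hom_R(D,M)$ and $E\cong D\otimes_R T$ reduces the question to whether $E\otimes_R\hom_R(D,M)\in\bd$, which is not any more accessible. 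As a plan, this is incomplete.

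The paper avoids the coupling entirely by not inducting on flat dimension. Having reduced to $E=E_R(R/\p)$ with $k=\height(\p)$, it takes a bounded flat resolution $F$ of $T=\hom_R(D,E_R(R/\p))$ and looks at the whole complex $F\otimes_R G$ at once. The decisive input is Proposition~\ref{20141003.2}: because $G$ is Gorenstein injective, $\Tor i{T}{G}=0$ for all $i\neq\height(\p)$, so $F\otimes_R G$ is exact except in the single degree $k$. Each term $F_i\otimes_R G$ lies in $\bd$, and feeding the cycle/boundary short exact sequences of the complex into the two-of-three property from both ends --- which only works because there is exactly one nonzero homology module --- lands $H_k(F\otimes_R G)=\Tor k{T}{G}$ in $\bd$. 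This is strictly weaker than what you try to prove (it uses that $G$ is Gorenstein injective, not merely in $\bd$, and that $E$ is indecomposable), and that extra specificity is precisely what makes the argument close. To repair your proof, drop the induction on $\fd_R T$, add the reduction to $E=E_R(R/\p)$, and use the concentration-in-one-degree argument on the full complex $F\otimes_R G$.
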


\begin{proof}
Since $R$ has a dualizing module, the class of Gorenstein injective $R$-modules is closed under direct sums by~\cite[Theorem 6.10]{3danes}. Thus, it suffices to show the case when $E = E_R(R/\p)$ for some prime ideal $\p$. Since $G$ is Gorenstein injective, there is an exact sequence $\cdots \to E_1 \to E_0 \to G \to 0$ such that each $E_i$ is injective (and hence Gorenstein injective) and the kernel $K_i$ of each map is Gorenstein injective. If we split this sequence into short exact sequences $0 \to K_0 \to E_0 \to G \to 0$ and $0 \to K_i \to E_i \to K_{i-1} \to 0$, we combine the corresponding short exact sequences from Corollary~\ref{20141010.2} to obtain the following exact sequence:
$$\cdots \to \tor_k(\hom(D,E),E_1) \to \tor_k(\hom(D,E),E_0) \to \tor_k(\hom(D,E),G) \to 0$$ 
Since each $\tor_k^R(\hom_R(D,E),E_i)$ is injective by Corollary~\ref{20141010.1}, we claim that $\tor_k^R(\hom_R(D,E),G)$ is Gorenstein injective.
By Lemma~\ref{lem141223a}\eqref{lem141223a2}, it suffices to show that that each module
$\tor_k^R(\hom_R(D,E),E_i)$ is Gorenstein injective and that we have $\gid_R(\tor_k^R(\hom_R(D,E),G)) < \infty$.

To  this end, let $$F := \quad 0 \to F_n \to \cdots \to F_0 \to 0$$ be a flat resolution of $\hom_R(D,E)$, and consider the sequence $$F \otimes_R G = \quad 0 \to F_n \otimes_R G \xrightarrow{\partial_n} \cdots \xrightarrow{\partial_1} F_0 \otimes_R G \to 0.$$ Note that $H_i(F \otimes_R G) = \tor_i^R(\hom_R(D,E),G) = 0$ for $i \neq k$. Since $G \in \bd$ by Fact~\ref{fact141223c}, and each $F_i$ is flat, it can be shown that each $F_i \otimes_R G \in \bd$. Repeated application of the two-of-three property gives that the only possibly non-vanishing homology module of $F \otimes_R G$ is in $\bd$; in other words, $\tor_k^R(\hom_R(D,E),G) \in \bd$. Fact~\ref{fact141223c} gives that $\gid_R(\tor_k^R(\hom_R(D,E),G)) < \infty$. 
Lastly, note that $\tor_k^R(\hom(D,E),E_i)$ is Gorenstein injective by Example~\ref{ex141223a} and Corollary~\ref{20141010.1}.
\end{proof}

We conclude this section with a technical lemma.

\begin{lemma}\label{lem141224d}
Let $M$ and $N$ be  $R$-modules, and let $\p\in\spec(R)$ such that $M$ satisfies $t(\p)$.
Let $X_1,\ldots,X_t$ be pair-wise disjoint subsets of $\spec(R)$, and assume that $N$ has a filtration
$0=N_{t+1}\subseteq N_t\subseteq\cdots\subseteq N_1=N$ such that each quotient $N_j/N_{j+1}$
decomposes as $N_j/N_{j+1}\cong \oplus_{\q\in X_j}N_{j,\q}$ where $N_{j,\q}$ satisfies $\tp[q]$.
\begin{enumerate}[\rm(a)]
\item \label{lem141224d1}
If $\p\notin\bigcup_{j=1}^tX_j$, then $\Tor iMN=0$ for all $i$.
\item \label{lem141224d2}
If $\p\in X_j$ for some $j$, then $\Tor iMN\cong\Tor iM{N_{j,\p}}$ for all $i$.
\item \label{lem141224d3}
One has $\supp_R(N)=\{\q\in\bigcup_{j=1}^tX_j\mid N_{j,\q}\neq 0\}$.
\end{enumerate}
\end{lemma}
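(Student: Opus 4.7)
The plan is to prove \eqref{lem141224d1} and \eqref{lem141224d2} by combining Lemma~\ref{lem141224c} with an induction through the filtration using the long exact sequence in Tor, and then to deduce \eqref{lem141224d3} from these, using Lemma~\ref{lem141224a} and the fact that $\kp$ itself satisfies $t(\p)$.

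First I would apply Tor to each short exact sequence $0\to N_{j+1}\to N_j\to N_j/N_{j+1}\to 0$. Because $\Tor iM-$ commutes with direct sums and $N_j/N_{j+1}\cong\oplus_{\q\in X_j}N_{j,\q}$ with $N_{j,\q}$ satisfying $t(\q)$, Lemma~\ref{lem141224c} kills every summand with $\q\neq\p$. Thus $\Tor iM{N_j/N_{j+1}}=0$ whenever $\p\notin X_j$, while if $\p\in X_j$ (necessarily at most one $j$ by pairwise disjointness) then $\Tor iM{N_j/N_{j+1}}\cong\Tor iM{N_{j,\p}}$. For part~\eqref{lem141224d1}, since $\p\notin\bigcup X_j$, every factor quotient has vanishing Tor, and a downward induction on $j$ starting from $N_{t+1}=0$ yields $\Tor iM{N_j}=0$ for all $j$, giving the claim with $j=1$.

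For \eqref{lem141224d2}, fix the unique $j$ with $\p\in X_j$ and handle the filtration in two passes. For indices $j'>j$, the argument of \eqref{lem141224d1} (applied to the tail of the filtration above $j$) gives $\Tor iM{N_{j+1}}=0$ for all $i$. The long exact sequence for $0\to N_{j+1}\to N_j\to N_j/N_{j+1}\to 0$ then collapses to an isomorphism $\Tor iM{N_j}\cong\Tor iM{N_j/N_{j+1}}\cong\Tor iM{N_{j,\p}}$. For indices $j'<j$, each $\Tor iM{N_{j'}/N_{j'+1}}$ vanishes, and the long exact sequences yield $\Tor iM{N_{j'}}\cong\Tor iM{N_{j'+1}}$; iterating down to $j'=1$ gives $\Tor iMN\cong\Tor iM{N_j}\cong\Tor iM{N_{j,\p}}$, as desired.

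Finally, for \eqref{lem141224d3}, I would test support using $\kp=R_{\p}/\p R_{\p}$, which satisfies $t(\p)$. If $\p\notin\bigcup X_j$, part \eqref{lem141224d1} gives $\Tor i{\kp}N=0$ for all $i$, so $\p\notin\supp_R(N)$. If $\p\in X_j$, part \eqref{lem141224d2} gives $\Tor i{\kp}N\cong\Tor i{\kp}{N_{j,\p}}$; this is zero for all $i$ when $N_{j,\p}=0$, and when $N_{j,\p}\neq 0$, Lemma~\ref{lem141224a} yields $\supp_R(N_{j,\p})=\{\p\}$, which forces $\Tor i{\kp}{N_{j,\p}}\neq 0$ for some $i$ (equivalently, $\p\in\supp_R(N_{j,\p})$). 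Combining the cases identifies $\supp_R(N)$ with the set specified in the statement. No single step is a serious obstacle; the only thing to be careful about is running the induction through the filtration in two directions for \eqref{lem141224d2}, and correctly invoking pairwise disjointness so that exactly one direct summand survives.
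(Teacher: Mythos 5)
Your proposal is correct and follows essentially the same route as the paper's own proof: an induction through the filtration using Lemma~\ref{lem141224c} and the long exact sequence in Tor for parts \eqref{lem141224d1} and \eqref{lem141224d2}, then testing support against $\kp$ via Lemma~\ref{lem141224a} for part \eqref{lem141224d3}. The only cosmetic difference is that for \eqref{lem141224d2} the paper handles the indices below $j$ in one step via the short exact sequence $0\to N_j\to N\to N/N_j\to 0$ (implicitly applying part \eqref{lem141224d1} to $N/N_j$), whereas you iterate the individual quotient sequences; these are equivalent.
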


\begin{proof}
\eqref{lem141224d1}
We argue by induction on $t$.

Base case: $t=1$. Our assumption implies that $N=N_1\cong N_1/N_2\cong\oplus_{\q\in X_1}N_{1,\q}$.
Thus, by Lemma~\ref{lem141224c} we have
\begin{align*}
\Tor iMN
&\cong\oplus_{\q\in X_1}\Tor iM{N_{1,\q}}=0.
\end{align*}

Induction step: $t>1$. 
The module $N_2$ satisfies the induction hypothesis, so we have $\Tor iM{N_2}=0$ for all $i$.
The module $N/N_2$ satisfies the base case, so we have $\Tor iM{N/N_2}=0$ for all $i$.
From the short exact sequence $0\to N_2\to N\to N/N_2\to 0$, the long exact sequence in $\Tor {}M-$ shows that $\Tor iMN=0$ for all $i$.
(Note that this does not use the fact that the sets $X_j$ are pair-wise disjoint.)

\eqref{lem141224d2}
Consider the short exact sequence
$0\to N_{j+1}\to N_j\to N_j/N_{j+1}\to 0$.
Part~\eqref{lem141224d1} shows that $\Tor iM{N_{j+1}}=0$ for all $i$,
so the long exact sequence in $\Tor {}M-$ provides isomorphisms
$$\Tor iM{N_j}\cong\Tor iM{N_j/N_{j+1}}$$ 
for all $i$.
Similarly, using the short exact sequence
$0\to N_j\to N\to N/N_j\to 0$,
we conclude that 
$$\Tor iM{N}\cong\Tor iM{N_j}\cong\Tor iM{N_j/N_{j+1}}$$ 
for all $i$.
Now, from the direct sum decomposition 
$N_j/N_{j+1}\cong \oplus_{\q\in X_j}N_{j,\q}$
we have
$$\Tor iM{N}\cong\Tor iM{N_j/N_{j+1}}\cong\oplus_{\q\in X_j}\Tor iM{N_{j,\q}}\cong\Tor iM{N_{j,\p}}$$ 
where the last isomorphism follows from Lemma~\ref{lem141224c}.

\eqref{lem141224d3}
For one containment, let $\p\in\{\q\in\bigcup_{j=1}^tX_j\mid N_{j,\q}\neq 0\}$.
From part~\eqref{lem141224d2}, we have the next isomorphism
\begin{equation}\label{eq141224a}
\Tor i{\kp}N\cong\Tor i{\kp}{N_{j,\p}}
\end{equation}
for all $i$. From Lemma~\ref{lem141224a}, we have $\p\in\supp_R(N_{j,\p})$;
hence, the second Tor-module in~\eqref{eq141224a} is non-zero for some $i$.
This implies that $\p\in\supp_R(N)$.

For the reverse containment, let $\p\in\spec(R)\smallsetminus\{\q\in\bigcup_{j=1}^tX_j\mid N_{j,\q}\neq 0\}$.
If we have $\p\notin\bigcup_{j=1}^tX_j$, then part~\eqref{lem141224d1} shows that $\Tor i{\kp}N=0$ for all $i$, so
$\p\notin\supp_R(N)$.
Thus, we assume that $\p\in\bigcup_{j=1}^tX_j$. Then we must have $N_{j,\p}=0$,
so the logic of the preceding paragraph implies that 
$\Tor i{\kp}N\cong\Tor i{\kp}{N_{j,\p}}=0$ for all $i$; that is, $\p\notin\supp_R(N)$.
\end{proof}

\section{Filtrations for Gorenstein Injective Modules}\label{sec141229a}

We begin this section with some convenient notation. 

\begin{notn}\label{notn141224a}
Let $R$ be Cohen-Macaulay with a dualizing module $D$, and 
let $G$ be a Gorenstein injective $R$-module.
For each prime $\p\in\spec(R)$,  set 
$$G_{(\p)}:=\Tor k{\hom_R(D,E_R(R/\p))}{G}\cong \Gamma_\p(G_\p)$$
where 
$k=\height(\p)$;
see Proposition~\ref{prop141231b}.
Note that $G_{(\p)}$ is Gorenstein injective by Proposition~\ref{20141010.3}.
\end{notn}

The next result is  Theorem~\ref{20141114.1} from the introduction, which generalizes~\cite[Theorem 3.1]{enochs}.

\begin{theorem}
\label{20141010.4}
Let $R$ be a $d$-dimensional Cohen-Macaulay ring with a dualizing module $D$. If $G$ is a Gorenstein injective $R$-module, then $G$ has a filtration $$0 = G_{d+1} \subset G_d \subset \cdots \subset G_1 \subset G_0 = G$$ such that each submodule $G_k$ and each quotient 
$G_k/G_{k+1}\cong\oplus_{\height(\p)=k}G_{(\p)}$
is Gorenstein injective and 
each module 
$G_{(\p)}$
is Gorenstein injective and 
satisfies $\tp$.
Moreover, this filtration and the direct sum compositions of the factors are unique and functorial.
\end{theorem}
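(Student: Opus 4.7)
The plan is to induct on $d=\dim R$. For the base case $d=0$, the ring $R$ is Cohen-Macaulay and Artinian, hence a finite product $R\cong\prod_{\m\in\mathrm{Max}(R)}R_\m$, and every $R$-module splits canonically over the maximal ideals. Each summand $G_\m$ is $\m$-torsion with $R\setminus\m$ acting invertibly, so it satisfies $\tp[m]$ and equals $\Gamma_\m(G_\m)\cong G_{(\m)}$. The trivial filtration $0=G_1\subset G_0=G$ therefore works, with Gorenstein injectivity of each $G_{(\m)}$ coming from Proposition~\ref{20141010.3} and uniqueness and functoriality built into the canonical decomposition of $R$.

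For the inductive step, I would peel off the top layer $G_d$ first and then reduce to the quotient. To construct $G_d$, I would use the intrinsic characterization $G_d:=\{g\in G: \height\sqrt{\mathrm{Ann}_R(g)}\ge d\}$ and identify it canonically with $\bigoplus_{\height\p=d}G_{(\p)}$. For the identification, choose a surjection $\pi\colon E\to G$ coming from the complete injective resolution of $G$, so that $E$ is injective and $\ker\pi$ is Gorenstein injective, and isolate the canonical Matlis summand $E':=\bigoplus_{\height\p=d}E_R(R/\p)^{(\mu_\p)}$. Then I would show $\pi(E')=G_d$ by invoking Proposition~\ref{20141009.1} (to see that Tor behaves well on the injective pieces), Lemma~\ref{20141002.3} (to kill cross-terms with lower-height primes), and the exactness of Corollary~\ref{20141010.2}; the pairwise Tor-vanishing of Lemma~\ref{lem141224c} together with the support computation of Lemma~\ref{lem141224d}\eqref{lem141224d3} gives the direct-sum decomposition and its canonicity.

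Next, I would show $G/G_d$ is Gorenstein injective. Since $G_d$ is a direct sum of Gorenstein injectives (via Proposition~\ref{20141010.3}) it is itself Gorenstein injective by~\cite[Theorem 6.10]{3danes}; applying the Bass-class two-of-three property from Fact~\ref{fact141223c} together with Lemma~\ref{lem141223a}\eqref{lem141223a2} to the sequence $0\to G_d\to G\to G/G_d\to 0$ then yields that $G/G_d$ is Gorenstein injective. Using Corollary~\ref{20141010.2} one computes $(G/G_d)_{(\p)}\cong G_{(\p)}$ for every $\p$ with $\height\p<d$, so no relevant Tor-data is lost on passage to the quotient. Iterating the peeling process on $G/G_d$ extracts $G_{d-1}/G_d$ as the height-$(d-1)$ layer, and continuing produces the entire filtration in $d$ steps; each $G_k$ is Gorenstein injective as a successive extension of Gorenstein injectives.

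The main obstacle is the canonical identification $G_d\cong\bigoplus_{\height\p=d}G_{(\p)}$ in the inductive step: one must both construct natural maps $G_{(\p)}\to G$ from the Matlis components of $E$ via $\pi$ and verify that the induced map out of the direct sum is an isomorphism onto the intrinsically defined $G_d$ independent of the choice of $\pi$. The isomorphism $G_{(\p)}\cong\Gamma_\p(G_\p)$ from Proposition~\ref{prop141231b} supplies the bridge between the resolution-based construction and the intrinsic definition, and uniqueness and functoriality of the whole filtration then follow from applying the intrinsic characterization at each stage.
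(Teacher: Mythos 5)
Your proposal takes a genuinely different route from the paper. The paper forms the double complex $\hom_R(D,J)\otimes_R P$ (where $J$ is the minimal injective resolution of $D$ and $P$ is a projective resolution of $G$), runs both spectral sequences, and reads off the filtration from the diagonal $E^1$ page; the sparse vanishing from Proposition~\ref{20141009.1} forces the second spectral sequence to collapse and deliver the filtration and the identifications $G_k/G_{k+1}\cong\Tor k{\hom_R(D,J^k)}{G}$ all at once, together with naturality. You instead propose to peel the top-height layer intrinsically and iterate downward. The two approaches are in the same spirit, but the spectral sequence bundles the bookkeeping that your peeling argument leaves exposed.

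The central gap is the assertion that $\pi(E')=G_d$ for a surjection $\pi\colon E\to G$ with Gorenstein injective kernel, where $E'=\oplus_{\height(\p)=d}E_R(R/\p)^{(\mu_\p)}$ is the height-$d$ Matlis piece. The containment $\pi(E')\subseteq G_d$ is immediate, but the reverse is not, and the lemmas you cite do not obviously combine to give it: if $g\in G_d$ is written $g=\pi(e'+e'')$ with $e''$ in the low-height part $E''$, then $\pi(e'')\in G_d$, but one cannot conclude $\pi(e'')\in\pi(E')$ without controlling how the kernel of $\pi$ interlocks the Matlis components. (Note that the big support $\operatorname{Supp}(Re'')=V(\operatorname{Ann}(e''))$ is upward-closed and does contain height-$d$ primes, so a naive annihilator argument fails; the relevant notion in this paper is small support $\supp_R$, which does not admit the pointwise description you are implicitly using.) Relatedly, your intrinsic definition $G_d:=\{g:\height\sqrt{\operatorname{Ann}_R(g)}\ge d\}$ and its identification with $\oplus_{\height(\p)=d}G_{(\p)}$ require genuine proof — in particular over a non-local $R$, where it must be verified that the height-$d$ primes contributing to $G_d$ are exactly the maximal ones — and that identification, not $\pi(E')=G_d$, is actually what you need, so the detour through $E'$ is a distraction. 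A minor point: Lemma~\ref{lem141223a}\eqref{lem141223a2} as stated requires a left $\dim(R)$-term Gorenstein injective resolution, not a single short exact sequence; one has to pad the sequence $0\to G_d\to G\to G/G_d\to 0$ by attaching an injective coresolution of $G_d$, or else cite Holm's injectively-resolving property directly. Finally, the ``induction on $d$'' framing is misleading: the quotient $G/G_d$ lives over the same $d$-dimensional ring, so what you have is a $d$-step construction, and the iterative step requires verifying $\Gamma_\m(G/G_d)=0$ so that the next layer is cleanly height $d-1$. The strategy is plausible but substantially underdeveloped compared to the paper's spectral sequence, which delivers the filtration, the direct-sum decomposition, and the functoriality in one stroke.
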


\begin{proof}
We use a  spectral sequence argument like the proof of~\cite[Theorem 3.1]{enochs}. Let $${^+}J := \quad 0 \to D \to J^0 \to J^1 \to \cdots \to J^d \to 0$$ be the minimal injective resolution of $D$, where $J^i := \bigoplus_{\height(\p)=i} E_R(R/\p)$. We have the exact sequence
$$\hom(D,{^+}J) = \quad 0 \to R \to \hom(D,J^0) \to \hom(D,J^1) \to \cdots \to \hom(D,J^d) \to 0$$ since $\hom_R(D,D) \cong R$ and $\ext_R^i(D,D) = 0$ for $i \geq 1$. Let $P^+ := \cdots \to P_1 \to P_0 \to G \to 0$ be a projective resolution of $G$, and form the following double complex.
$$\xymatrix@C=5mm{
& 0 & 0 & & 0 \\
0 \ar[r] & \hom(D,J^0) \otimes P_0 \ar[r]\ar[u] & \hom(D,J^1) \otimes P_0 \ar[r]\ar[u] & \cdots \ar[r] & \hom(D,J^d) \otimes P_0 \ar[r]\ar[u] & 0 \\
0 \ar[r] & \hom(D,J^0) \otimes P_1 \ar[r]\ar[u] & \hom(D,J^1) \otimes P_1 \ar[r]\ar[u] & \cdots \ar[r] & \hom(D,J^d) \otimes P_1 \ar[r]\ar[u] & 0 \\
& \vdots \ar[u] & \vdots \ar[u] & & \vdots \ar[u]
}$$
We index this complex such that $\hom_R(D,J^i) \otimes_R P_j$ has index $(-i,j)$. When we first compute horizontal homology,
to find the $E^1$ page of the spectral sequence, we obtain modules of the form 
$$H_i(\hom_R(D,J) \otimes_R P_j) \cong H_i(\hom_R(D,J)) \otimes_R P_j$$ 
since each $P_j$ is flat. We hence have 
$$H_i(\hom_R(D,J) \otimes_R P_j) \cong \begin{cases}
R \otimes_R P_j \cong P_j & \text{if $i = 0$} \\ 0 & \text{if $i \neq 0$.} \end{cases}$$
When we compute vertical homology on the $E^1$-page, to find the $E^2$-page of the spectral sequence, we obtain $G$ in the $(0,0)$ index and $0$ elsewhere.

When we instead first compute vertical homology of our double complex, we obtain modules $H_j(\hom_R(D,J^i) \otimes_R P) = \tor_j^R(\hom_R(D,J^i),G)$. Since $J^i = \bigoplus_{\height(\p)=i} E_R(R/\p)$, Proposition~\ref{20141009.1} gives that $\tor_j^R(\hom_R(D,J^i),G) = 0$ unless $i = j$. 
Thus, the $E^1$ page of this spectral sequence is concentrated on a diagonal.
When we compute horizontal homology here (to find the $E^2$-page of the spectral sequence), we obtain modules $\tor_k^R(\hom_R(D,J^k),G)$ in the $(-k,k)$ index and $0$ elsewhere.

This means that $G$ has a filtration $0 = G_{d+1} \subset G_d \subset \cdots \subset G_1 \subset G_0 = G$ such that $G_k/G_{k+1} \cong \tor_k^R(\hom_R(D,J^k),G)$ for $0 \leq k \leq d$. Lemma~\ref{20141010.3} gives that each of these quotients is Gorenstein injective.
In particular, the module
$G_d\cong G_d/G_{d+1}$ is Gorenstein injective.
Since the same is true of $G_{d-1}/G_d$,
the short exact sequence $0\to G_d\to G_{d-1}\to G_{d-1}/G_d\to 0$ shows that $G_{d-1}$ is Gorenstein injective.
Working our way up the filtration in a similar manner, we conclude that each module $G_k$ is Gorenstein injective as well.

Since $J^k = \bigoplus_{\height(\p)=k} E_R(R/\p)$ for each $k$, there is a natural isomorphism $$\tor_k^R(\hom_R(D,J^k),G) \cong \bigoplus_{\height(\p)=k} \tor_k^R(\hom_R(D,E_R(R/\p)),G).$$ Since we have shown that $\hom_R(D,E_R(R/\p))$ has property $t(\p)$, 
it follows that $\tor_k^R(\hom_R(D,E_R(R/\p)),G)$ has property $\tp$ as well.
Again, Lemma~\ref{20141010.3} implies that each module  
$G_{(\p)}=\tor_k^R(\hom_R(D,E_R(R/\p)),G)$
is Gorenstein injective.

The uniqueness and functoriality of the filtrations and decompositions are established exactly as in the proof of~\cite[Theorem 3.1]{enochs}.
\end{proof}

\begin{proposition}\label{prop141224a}
Let $R$ be Cohen-Macaulay with a dualizing module $D$, and let $G$ be a Gorenstein injective $R$-module.
Then $\supp_R(G)=\{\p\in\spec(R)\mid G_{(\p)}\neq 0\}$.
\end{proposition}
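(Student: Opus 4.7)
The plan is to derive this essentially as an immediate corollary of Theorem~\ref{20141010.4} combined with Lemma~\ref{lem141224d}\eqref{lem141224d3}. The whole point is that Theorem~\ref{20141010.4} produces a filtration of $G$ in exactly the shape that Lemma~\ref{lem141224d}\eqref{lem141224d3} requires for computing small support, so there is essentially no new homological work to do.

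First, I would extract from Theorem~\ref{20141010.4} the filtration $0=G_{d+1}\subset G_d\subset\cdots\subset G_0=G$ whose quotients decompose as $G_k/G_{k+1}\cong\bigoplus_{\height(\p)=k}G_{(\p)}$, with each summand $G_{(\p)}$ satisfying $t(\p)$ (the last fact is exactly the moreover-clause of Theorem~\ref{20141010.4}, or alternatively follows from the $t(\p)$-property of $\hom_R(D,E_R(R/\p))$ by an easy lifting argument through the long exact Tor sequence).

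Next I would match this data against the setup of Lemma~\ref{lem141224d}: set $X_j:=\{\p\in\spec(R)\mid\height(\p)=j-1\}$ for $j=1,\ldots,d+1$, and re-index via $N_j:=G_{j-1}$, producing a filtration $0=N_{d+2}\subset N_{d+1}\subset\cdots\subset N_1=G$ with $N_j/N_{j+1}\cong\bigoplus_{\q\in X_j}N_{j,\q}$ where $N_{j,\q}:=G_{(\q)}$ satisfies $t(\q)$. The sets $X_j$ are pair-wise disjoint because each prime has a unique height, and their union equals $\spec(R)$ precisely because $R$ is Cohen-Macaulay of Krull dimension $d$ (so every prime has height between $0$ and $d$).

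Finally I would invoke Lemma~\ref{lem141224d}\eqref{lem141224d3}, which instantly yields
\[
\supp_R(G)=\Bigl\{\q\in\bigcup_{j}X_j\;\Big|\;N_{j,\q}\neq 0\Bigr\}=\{\p\in\spec(R)\mid G_{(\p)}\neq 0\},
\]
as claimed. The only real ``obstacle'' is the cosmetic re-indexing between the two conventions for labeling a descending filtration; all the substantive content already lives in Theorem~\ref{20141010.4} and in Lemma~\ref{lem141224d}\eqref{lem141224d3}, so I do not anticipate any genuine difficulty.
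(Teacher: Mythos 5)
Your proposal is correct and follows exactly the same route as the paper: apply Lemma~\ref{lem141224d}\eqref{lem141224d3} to the filtration produced by Theorem~\ref{20141010.4}, using that each $G_{(\p)}$ satisfies $t(\p)$. The re-indexing you spell out is the only bookkeeping the paper leaves implicit.
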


\begin{proof}
This follows from Lemma~\ref{lem141224d}\eqref{lem141224d3} since  $G_{(\p)}$ satisfies $\tp$.
\end{proof}

\begin{rmk}\label{rmk141231b}
If $I$ is an injective module, 
then the above result partially recovers Matlis' decomposition $I\cong\oplus_\p E_R(R/\p)^{(\mu_\p)}$.
Indeed, since $I$ is injective, each summand $I_{(\p)}\cong \Gamma_\p(I_{\p})$ is injective and satisfies $\tp$. 
Moreover, given such a decomposition $I\cong\oplus_\q E_R(R/\q)^{(\mu_\q)}$,by Fact~\ref{fact141231a}  we must have
$$I_{(\p)}\cong\bigoplus_\q \Gamma_\p(E_R(R/\q)_{\p})^{(\mu_\q)}\cong E_R(R/\p)^{(\mu_\p)}.$$
\end{rmk}

\section{Examples}\label{sec150430a}

Given the parallels between our results and those from~\cite{enochs}, it would be natural to expect that the class of Gorenstein injective modules is closed under tensor products in our setting.
The following example shows that this is not the case for non-Gorenstein rings in general.
Moreover, it shows that the ``generically Gorenstein'' assumption in Theorem~\ref{20141114.2} is necessary.

\begin{example}\label{ex141218a}
Let $k$ be a field and set $R:=k[X_1,\ldots,X_d]/(X_1,\ldots,X_d)^2$ with $d\geq 2$.
Then $R$ is a local ring with maximal ideal $\m:=(X_1,\ldots,X_d)R$ and residue field $k$
such that $\m^2=0$. In particular, $R$ is ``connected''; that is, it is not a proper direct product of rings.
Since $R$ is not Gorenstein, we know from~\cite[Theorem 1.1]{chen} that each finitely generated
Gorenstein projective $R$-module is projective; hence, it is free, since $R$ is local. Also, as $R$ is an artinian local ring,
it is Cohen-Macaulay with dualizing module $E:=E_R(k)$. 

Given the specific form of $R$, we know that $E$ is finitely generated over $R$ by elements
$x_1^*,\ldots,x_d^*$ subject to the relations $x_ix_j^*=0$ when $i\neq j$ and such that $x_ix_i^*=x_jx_j^*$ for all $i,j$.
From this, it follows that $\m (E\otimes_RE)=0$.
Furthermore, Nakayama's Lemma implies that $E\otimes_RE$ is minimally generated by the $d^2$ elements of the form
$x_i^*\otimes x_j^*$. Thus, we have $E\otimes_RE\cong k^{d^2}$.

Since $E$ is injective over $R$, it is Gorenstein injective. On the other hand, the tensor product $E\otimes_RE\cong k^{d^2}$ is not Gorenstein injective
(in fact, it has infinite Gorenstein injective dimension) as follows. Suppose that $\gid_R(k^{d^2})<\infty$. 
Then $\gid_R(k^{d^2})\leq\dim(R)=0$ by Lemma~\ref{lem141223a}\eqref{lem141223a1}, so $k^{d^2}$ is Gorenstein injective.
It follows that $k$ is Gorenstein injective as well.
We conclude from~\cite[5.1]{3danes} that $k\cong\hom_R(k,E)$ has finite Gorenstein flat dimension,
so it is Gorenstein projective by~\cite[1.4, 3.1, 3.8]{3danes}.
On the other hand, $k$ is not free over $R$, so this contradicts the fact that 
finitely generated
Gorenstein projective $R$-modules must be free.
\end{example}

The next example gives a negative answer to the following question implicit in~\cite[Remark 4.2]{enochs}:
if $R$ is Gorenstein and $G,H$ are Gorenstein injective, must $\Tor iGH$ be Gorenstein injective for all $i$?

\begin{example}\label{ex141231a}
Let $k$ be a field and set $R:=k[\![X,Y]\!]/(XY)$ with maximal ideal $\m:=(X,Y)R$. Also, set $E:=E_R(k)$ and $(-)^\vee:=\hom_R(-,E)$.
The modules $R/XR$ and $R/YR$ are finitely generated and Gorenstein projective.
It follows that the duals $G:=(R/XR)^\vee$ and $H:=(R/YR)^\vee$ are Gorenstein injective;
moreover, they are non-zero since $E$ is faithfully injective.
Note that $XG=0=YH$.
It follows that $\m\Tor iGH=0$ for all $i$.
Also, since $R/XR$ and $R/YR$ are finitely generated, and $E$ satisfies $\tp[m]$, we conclude that $G$ and $H$ satisfy $\tp[m]$ as well.

Lemma~\ref{lem141224a} implies that $\supp_R(G)=\{\m\}=\supp_R(H)$.
Thus, we conclude from Fact~\ref{fact141224a} that $\Tor iGH\neq 0$ for some $i$. 
Lemma~\ref{20141002.2} implies that $G\otimes_RH=0$, so we must have $i\geq 1$ here.

Claim: We have
\begin{equation}\label{eq141231a}
\Tor iGH\cong
\begin{cases}
k & \text{if $i=2,4,6,\ldots$} \\
0 & \text{otherwise.}
\end{cases}
\end{equation}
Before proving the claim, we show how it deals with~\cite[Remark 4.2]{enochs}.
Suppose that $\Tor 2GH\cong k$ were Gorenstein injective.
We know that finitely generated modules of finite Gorenstein injective dimension must have
Gorenstein injective dimension equal to $\operatorname{depth}(R)=1$, by~\cite[Corollary 2.5]{MR2495252}, a contradiction.

Now we prove the claim.

Step 1:
We have  exact sequences
\begin{gather}
\label{eq141231b}
0\to k\to H\xrightarrow XH\to 0\\
\label{eq141231b'}
0\to k\to G\xrightarrow YG\to 0.
\end{gather}
Indeed, as a $k$-vector space, $E$ has a basis
$\{1,X^{-1},Y^{-1},X^{-2},Y^{-2},\ldots\}$.
Write $1=X^0=Y^0$.
With this basis, the $R$-module structure on $E$ is given by the following, where $p,q,i,j\geq 0$:
\begin{align*}
X^{p}X^{-i}
&=\begin{cases}
X^{p-i}&\text{if $p\leq i$}\\
0&\text{if $p>i$}
\end{cases}
&Y^{q}Y^{-j}
&=\begin{cases}
X^{q-j}&\text{if $q\leq j$}\\
0&\text{if $q>j$}
\end{cases}
\\
X^{p}Y^{-j}
&=\begin{cases}
1&\text{if $p=0=j$}\\
0&\text{otherwise}
\end{cases}
&Y^{q}X^{-i}
&=\begin{cases}
1&\text{if $q=0=i$}\\
0&\text{otherwise.}
\end{cases}
\end{align*}
Using the isomorphism
$$H:=\hom_R(R/YR,E)\cong\{\alpha\in E\mid Y\alpha=0\}$$
it is straightforward to show that
$H$ has a $k$-basis $\{1,X^{-1},X^{-2},\ldots\}$,
with the $R$-module structure on $H$ determined by the relations above
(since we are identifying $H$ with a submodule of $E$).
From this, it follows that the map $H\xrightarrow XH$ is onto with kernel given by $k\cdot 1\cong k$.
This establishes the exact sequence~\eqref{eq141231b}; the other one is established by symmetry.
This concludes Step 1. 

Step 2:
We have
\begin{equation*}
\Tor iGk\cong
\begin{cases}
k & \text{if $i\geq 1$} \\
0 & \text{if $i=0$.}
\end{cases}
\end{equation*}
Indeed, consider the following truncated $R$-free resolution of $k$:
\begin{equation*}
F:=\cdots
\xrightarrow{\left(\begin{smallmatrix}Y & 0 \\ 0 & X\end{smallmatrix}\right)}R^2
\xrightarrow{\left(\begin{smallmatrix}X & 0 \\ 0 & Y\end{smallmatrix}\right)}R^2
\xrightarrow{\left(\begin{smallmatrix}Y & 0 \\ 0 & X\end{smallmatrix}\right)}R^2
\xrightarrow{\left(\begin{smallmatrix}X & Y\end{smallmatrix}\right)}R
\to 0.
\end{equation*}
Since $XG=0$, tensoring with $G$ yields the following complex:
\begin{equation*}
G\otimes_RF\cong\cdots
\xrightarrow{\left(\begin{smallmatrix}Y & 0 \\ 0 & 0\end{smallmatrix}\right)}G^2
\xrightarrow{\left(\begin{smallmatrix}0 & 0 \\ 0 & Y\end{smallmatrix}\right)}G^2
\xrightarrow{\left(\begin{smallmatrix}Y & 0 \\ 0 & 0\end{smallmatrix}\right)}G^2
\xrightarrow{\left(\begin{smallmatrix}0 & Y\end{smallmatrix}\right)}G
\to 0.
\end{equation*}
Using the exact sequence~\eqref{eq141231b'}, one readily verifies the desired conclusions about $\Tor iGk$ from the above description of $G\otimes_RF$.
This concludes Step 2.

Step 3: We verify the claim (hence, concluding the example) by verifying the isomorphisms in~\eqref{eq141231a}.
For the case $i=0$, since $G$ and $H$ satisfy $\tp[m]$ we have $G\otimes_RH=0$ from Lemma~\ref{20141002.2}.
Next, consider the long exact sequence in $\Tor{}{G}{-}$ associated to
the short exact sequence~\eqref{eq141231b}. 
Given the established vanishing  $G\otimes_Rk=0=G\otimes_R H$, this long exact sequence begins as follows:
$$\Tor 1GH\xrightarrow[=0]{X}\Tor 1GH\to 0.$$
The map here is 0 as $XG=0$.
The exactness of this sequence implies $\Tor 1GH=0$.
Further in the long exact sequence, for $i\geq 1$, we have the following:
$$\Tor{i+1}GH\xrightarrow 0\Tor{i+1}GH\to\Tor iGk\to \Tor iGH\xrightarrow 0\Tor iGH.$$
In other words, we have the following short exact sequence
\begin{equation}
\label{eq141231e}
0\to\Tor{i+1}GH\to k\to \Tor iGH\to 0.
\end{equation}
As $\Tor 1GH=0$, the sequence~\eqref{eq141231e} for $i=1$ implies that $\Tor 2GH\cong k$. 
From this, the sequence for $i=2$ implies that $\Tor 3GH=0$.
The remaining $\Tor iGH$ follow similarly, say, by induction on $i$.
\end{example}

The next example shows that the filtration from Theorem~\ref{20141010.4} need not yield a direct sum decomposition of $G$;
that is, we can have $G\not\cong\oplus_k G_k/G_{k+1}\cong\oplus_\p G_{(\p)}$.
This is in stark contrast with Matlis' decomposition result for injective modules.

\begin{example}\label{ex150430a}
Let $k$ be a field, and set $R:=k[\![X,Y]\!]/(X^2)$ with maximal  ideal $\m:=(X,Y)R$ and $E:=E_R(k)$.
Set $\q:=(X)R$ and $\ol R:=R/\q$ with $\ol\m:=\m\ol R$, and note that $\spec(R)=\{\q,\m\}$.
Consider the natural inclusion $\ol R\subseteq E_R(\ol R)$, and set $G:=E_R(\ol R)/\ol R$.

We claim that $G$ is an indecomposable Gorenstein injective $R$-module with $G_{(\q)},G_{(\m)}\neq 0$.
Before proving the claim, we note that it immediately implies 
that we have $G\not\cong\oplus_\p G_{(\p)}\cong\oplus_k G_k/G_{k+1}$.
Now we prove the claim. 

Step 1: $G$ is Gorenstein injective. Since $R$ is a Gorenstein ring of dimension 1 and $E_R(\ol R)$ is injective over $R$,
this follows from~\cite[Theorem 4.1]{enochs:gipm} or Lemma~\ref{lem141223a}\eqref{lem141223a2}.

Step 2: $G_{(\q)}\cong\kappa(\q)\neq 0$. By construction, we have $\ol R_{\q}\cong \kappa(\q)$.
Also, we have 
$$E_R(\ol R)=E_R(R/\q)\cong E_{R_{\q}}(\kappa(\q))\cong R_{\q}$$
where the first isomorphism is standard, and the second isomorphism is from the fact that $R_{\q}$ is artinian, local, and Gorenstein.
The natural exact sequence
\begin{equation}\label{eq150430a}
0\to\ol R\to E_R(\ol R)\to G\to 0
\end{equation}
then localizes to an exact sequence of the  form
$$0\to\kappa(\q)\to R_{\q}\to G_{\q}\to 0.$$
Since $R_{\q}$ is artinian, local, and Gorenstein, this sequence must have the form
$$0\to\operatorname{Soc}(R_{\q})\to R_\q\to G_{(\q)}\to 0.$$
The fact that the maximal ideal of $R_\q$ is square-zero then implies that
$G_{(\q)}\cong R_\q/\q R_{\q}=\kappa(\q)$, as desired.

Step 3: $G_{(\m)}\cong E_{\ol R}(k)\neq 0$.
Note that we have $G_{(\m)}\cong\Gamma_{\m}(G_\m)=\Gamma_\m(G)$.
Since $E_R(\ol R)$ satisfies $\tp[q]$ with $\q\subsetneq\m$, we have $\Gamma_{\m}(E_R(\ol R)_\m)=\Gamma_\m(E_R(\ol R))=0$.
Also, we have $H^i_{\m}(G_\m)=0=H^i_{\m}(E_R(\ol R)_\m)$ for all $i\neq 0$ by Propositions~\ref{prop141231b} and~\ref{20141003.2}.
Thus, the long exact sequence in $H^i_\m(-)$ associated to~\eqref{eq150430a} explains the second isomorphism in the next sequence:
\begin{align*}
G_{(\m)}
&\cong \Gamma_\m(G)
\cong H^1_\m(\ol R)
\cong H^1_{\ol\m}(\ol R)
\cong E_{\ol R}(k).
\end{align*}
The third isomorphism follows from the standard independence-of-base-ring theorem for local cohomology,
and the fourth isomorphism is from the fact that $\ol R$ is a 1-dimensional local Gorenstein ring.
This completes Step 3.

Step 4: $G\not\cong G_{(\m)}\oplus G_{(\q)}$.
Note that Steps 3 and 4 imply, in particular, that $XG_{(\m)}=0=X G_{(\q)}$.
Thus, it suffices to show that $XG\neq 0$.
Suppose by way of contradiction that $XG=0$.
By definition of $G$, this implies that $XE_R(\ol R)\subseteq\ol R$.
However, we have
$XE_R(\ol R)=XE_{R_\q}(\kappa(\q))\cong\kappa(\q)$
since $R_\q$ is an artinian, local, Gorenstein ring with square-zero maximal ideal $X R_\q\neq 0$.
Since $\ol R$ is a 1-dimensional domain, it cannot contain a copy of its quotient field $\kappa(\q)$,
contradicting the containment $XE_R(\ol R)\subseteq\ol R$.
This completes Step 4.

Step 5: $G$ is indecomposable. Suppose that $G\cong G'\oplus G''$ with $0\neq G',G''\subseteq G$.
Since $G$ is Gorenstein injective over $R$, so are $G'$ and $G''$.
It follows that $G'_{(\m)}= \Gamma_\m(G')=G'\cap\Gamma_\m(G)=G'\cap G_{(\m)}$,
and similarly $G''_{(\m)}=G''\cap G_{(\m)}$. Since the sum $G'+G''$ is direct, we have $G'\cap G''=0$;
hence, the previous sentence implies that $G'_{(\m)}\cap G_{(\m)}''=0$, and furthermore that
$G_{(\m)}=G'_{(\m)}+G''_{(\m)}$. Thus, we have $G_{(\m)}=G'_{(\m)}\oplus G''_{(\m)}$.
Step 3 shows that $G_{(\m)}$ is indecomposable, so (by symmetry) we must have
$G_{(\m)}=G'_{(\m)}$ and $G''_{(\m)}=0$.
Since we have assumed that $G''\neq 0$, our filtration implies that $0\neq G''\cong G''_{(\q)}$.

The functoriality of our filtration yields the following commutative diagram with exact rows and columns:
$$\xymatrix{
& 0\ar[d]
& 0\ar[d]
& 0\ar[d]
\\
0\ar[r]
&G''_{(\m)}\ar[r]\ar[d]
&G''\ar[d]\ar[r]
&G''_{(\q)}\ar[r]\ar[d]
&0
\\
0\ar[r]
&G_{(\m)}\ar[r]\ar[d]
&G\ar[d]\ar[r]
&G_{(\q)}\ar[r]\ar[d]
&0
\\
0\ar[r]
&G'_{(\m)}\ar[r]\ar[d]
&G'\ar[d]\ar[r]
&G'_{(\q)}\ar[r]\ar[d]
&0
\\
&
0&0&0.}$$
The facts we have established above imply that this has the following form:
$$\xymatrix{
& 0\ar[d]
& 0\ar[d]
& 0\ar[d]
\\
0\ar[r]
&0\ar[r]\ar[d]
&G''\ar[d]\ar[r]
&G''_{(\q)}\ar[r]\ar[d]^\alpha
&0
\\
0\ar[r]
&G_{(\m)}\ar[r]\ar[d]
&G\ar[d]\ar[r]
&\kappa(\q)\ar[r]\ar[d]
&0
\\
0\ar[r]
&G'_{(\m)}\ar[r]\ar[d]
&G'\ar[d]\ar[r]
&G'_{(\q)}\ar[r]\ar[d]
&0
\\
&
0&0&0.}$$
Since $G''_{(\q)}$ is a non-zero $R_{\q}$-module, and $\kappa(\q)$ is a simple $R_\q$-module, the map $\alpha$ must be surjective,
so we have $G'_{(\q)}=0$. Hence, our diagram has the next form:
$$\xymatrix{
& 0\ar[d]
& 0\ar[d]
& 0\ar[d]
\\
0\ar[r]
&0\ar[r]\ar[d]
&G''\ar[d]\ar[r]
&G''_{(\q)}\ar[r]\ar[d]^\alpha
&0
\\
0\ar[r]
&G_{(\m)}\ar[r]\ar[d]
&G\ar[d]\ar[r]
&\kappa(\q)\ar[r]\ar[d]
&0
\\
0\ar[r]
&G'_{(\m)}\ar[r]\ar[d]
&G'\ar[d]\ar[r]
&0\ar[r]\ar[d]
&0
\\
&
0&0&0.}$$
Thus, we conclude  that
$G'=G'_{(\m)}=G_{(\m)}$ and $G''=G'_{(\q)}=G_{(\q)}$.
It follows that the given direct sum decomposition then has the form
$G\cong G_{(\m)}\oplus G_{(\q)}$, contradicting Step 4.
\end{example}

We end this section with a natural question.

\begin{question}
Given a Gorenstein injective $R$-module $G$, does there exist an isomorphism $G\cong\oplus_\lambda G_\lambda$
where each $G_\lambda$ is indecomposable? (Note that, given such a decomposition, each $G_\lambda$ is automatically Gorenstein injective.)
\end{question}

\section{$C$-Gorenstein Injective Results}
\label{sec141229b}

In this section, we prove a $C$-Gorenstein injective version of Theorem~\ref{20141114.1} as well as Theorem~\ref{20141114.2}.

\begin{assumption}
We assume for this section that $C$ is a semidualizing $R$-module.
\end{assumption}

\begin{theorem}
\label{cor141225a}
Let $R$ be a $d$-dimensional Cohen-Macaulay ring with a dualizing module $D$. If $G$ is a $C$-Gorenstein injective  $R$-module, then $G$ 
has a filtration $$0 = G_{d+1} \subset G_d \subset \cdots \subset G_1 \subset G_0 = G$$ such that each submodule $G_k$ and each quotient 
$G_k/G_{k+1}\cong\oplus_{\height(\p)=k}G_{(\p)}$
is $C$-Gorenstein injective and 
each module 
$G_{(\p)}\cong \Gamma_\p(G_\p)$
is $C$-Gorenstein injective and 
satisfies $\tp$.
Moreover, this filtration and the direct sum compositions of the factors are unique and functorial.
\end{theorem}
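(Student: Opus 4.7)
The plan is to deduce Theorem~\ref{cor141225a} from Theorem~\ref{20141010.4} by passing to the trivial extension $S := R \ltimes C$ and invoking Fact~\ref{fact141223b}, which identifies $C$-Gorenstein injective $R$-modules with Gorenstein injective $S$-modules. The first task is to verify that $S$ satisfies the hypotheses of Theorem~\ref{20141010.4}. Cohen-Macaulayness of $S$, with $\dim(S) = d$, follows from the fact that the semidualizing module $C$ is maximal Cohen-Macaulay over the Cohen-Macaulay ring $R$, combined with the standard fact that a finite ring extension of a Cohen-Macaulay ring is Cohen-Macaulay whenever it is Cohen-Macaulay as a module. Existence of a dualizing module for $S$ follows from Fact~\ref{fact141223a}: writing $R = T/I$ with $T$ Gorenstein of finite Krull dimension, and picking generators $c_1, \ldots, c_n$ of $C$ as an $R$-module, the ring map $T[X_1, \ldots, X_n] \to S$ sending $X_i \mapsto (0, c_i)$ and extending the composition $T \twoheadrightarrow R \hookrightarrow S$ is surjective; this exhibits $S$ as a Cohen-Macaulay homomorphic image of the Gorenstein ring $T[X_1, \ldots, X_n]$ of finite Krull dimension, so $S$ has a dualizing module.

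Next, I would set up the correspondence $\spec(S) \leftrightarrow \spec(R)$ via the natural surjection $\pi\colon S \twoheadrightarrow R$, whose kernel is the square-zero ideal consisting of pairs $(0, c)$. This gives a height-preserving bijection sending $P := \pi^{-1}(\p) \in \spec(S)$ to $\p \in \spec(R)$, and the $R$-module structure on $G$ agrees with the $S$-module structure obtained by pulling back along $\pi$; in particular, $S$-submodules of $G$ coincide with $R$-submodules of $G$. Because elements of the form $(0, c)$ annihilate $G$, property $t(P)$ for $G$ as an $S$-module is equivalent to property $t(\p)$ for $G$ as an $R$-module, and $\Gamma_P(G_P) \cong \Gamma_\p(G_\p)$ naturally.

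Now I would apply Theorem~\ref{20141010.4} to $G$ viewed as an $S$-module (a Gorenstein injective $S$-module by Fact~\ref{fact141223b}), producing a filtration $0 = G_{d+1} \subset G_d \subset \cdots \subset G_0 = G$ of $S$-submodules whose subquotients decompose as direct sums indexed by primes $P \in \spec(S)$ of fixed height, with each $G_{(P)} \cong \Gamma_P(G_P)$ Gorenstein injective over $S$ and satisfying $t(P)$. Translating via the correspondence above yields the required conclusions: each $G_k$ and each $G_{(\p)}$ is $C$-Gorenstein injective over $R$ (by Fact~\ref{fact141223b}), satisfies $t(\p)$, and is isomorphic to $\Gamma_\p(G_\p)$. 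Uniqueness and functoriality of the filtration and of the decompositions transfer from the $S$-side automatically, since homomorphisms of $R$-modules agree with homomorphisms of the corresponding $S$-modules via $\pi$.

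The principal obstacle is the verification that $S = R \ltimes C$ carries a dualizing module, since the remaining steps are either immediate from Fact~\ref{fact141223b} or formal translations between $R$- and $S$-module structures. Once the explicit presentation of $S$ as a Cohen-Macaulay homomorphic image of the Gorenstein ring $T[X_1, \ldots, X_n]$ of finite Krull dimension is in hand, the rest of the proof is essentially bookkeeping.
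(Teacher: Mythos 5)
Your proposal follows essentially the same route as the paper: pass to $S=R\ltimes C$, check that $S$ is Cohen--Macaulay with a dualizing module, apply Theorem~\ref{20141010.4} over $S$, and translate back via the height-preserving homeomorphism $\spec(S)\cong\spec(R)$ induced by the square-zero kernel $0\oplus C$. Your argument for why $S$ admits a dualizing module (an explicit presentation of $S$ as a quotient of $T[X_1,\ldots,X_n]$) makes fully explicit a step the paper only sketches, but the underlying idea is identical.
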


\begin{proof}
Set $S=R\ltimes C$.
Since $R$ is Cohen-Macaulay, the $R$-module $C$ is locally maximal Cohen-Macaulay,
so $S$ is Cohen-Macaulay. 
Also, $R$ is a homomorphic image of a finite-dimensional Gorenstein ring;
since $S$ is a module-finite $R$-algebra, $S$ is also a homomorphic image of a finite-dimensional Gorenstein ring.
Thus, $S$ has a dualizing module. 
(It is straightforward to show that $\hom_R(C,D)\oplus D$ is dualizing for $S$, but we do not need this here; see~\cite{jorgensen:prnsm}.)

The $R$-module $G$ is Gorenstein injective over $S$ by Fact~\ref{fact141223b}.
Thus, Theorem~\ref{20141010.4} implies that 
$G$ has a filtration
$$0 = G_{d+1} \subset G_d \subset \cdots \subset G_1 \subset G_0 = G$$ such that each submodule $G_k$ and each quotient 
$G_k/G_{k+1}\cong\oplus_{\height(\q)=k}G_{(\q)}$
is Gorenstein injective over $S$ and 
each module 
$G_{(\q)}\cong \Gamma_\q(G_\q)$
is Gorenstein injective over $S$ and satisfies $\tp[q]$.
It follows that each submodule $G_k$,
each quotient $G_k/G_{k+1}$,
and each summand $G_{(\q)}$ is $C$-Gorenstein injective over $R$.

Note that the direct sum $0\oplus C$ is an ideal of $S$ with $(0\oplus C)^2=0$. 
It follows that each prime $\q\in\spec(S)$ is of the form $\q=\p\oplus C$ for a unique prime $\p\in\spec(R)$.
Thus, given an $R$-module $M$, one has $M_\q\cong M_\p$ and $\Gamma_\q(M)=\Gamma_\p(M)$;
this implies that
$G_{(\q)}\cong \Gamma_\q(G_\q)\cong \Gamma_\p(G_\p)\cong G_{(\p)}$.
Also, from~\cite[Proposition 3.6]{sather:scc}
we have $\q\in\supp_S(G)$ if and only if $\p\in\supp_R(G)$.
Thus, we have the desired filtration of $G$ over $R$.
The uniqueness and functoriality 
also follow from Theorem~\ref{20141010.4}; alternately, apply
the proof of~\cite[Theorem 3.1]{enochs}.
\end{proof}

Our proof of Theorem~\ref{20141114.2} from the introduction uses the following lemma.

\begin{lemma}\label{lem141218a}
Let $R$ be Cohen-Macaulay with a dualizing module $D$, and let  $\p$ be a minimal prime of $R$.
Assume that $C_\p\cong D_\p$.
Then each $R_{\p}$-module $T$  is $C$-Gorenstein injective over $R$.
\end{lemma}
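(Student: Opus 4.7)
My plan is to pass to the idealization and apply Fact~\ref{fact141223b}: it suffices to prove that $T$ is Gorenstein injective over $S := R \ltimes C$.

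First, I would observe that since $(0 \oplus C)^2 = 0$, the primes of $S$ are in bijection with those of $R$, and $\q := \p \oplus C$ is a minimal prime of $S$. Localization of the idealization gives $S_\q \cong R_\p \ltimes C_\p$, which by the hypothesis $C_\p \cong D_\p$ is isomorphic to $R_\p \ltimes D_\p$. Since $R_\p$ is a zero-dimensional Cohen-Macaulay local ring with dualizing module $D_\p$, Fact~\ref{fact141223a} shows that $S_\q \cong R_\p \ltimes D_\p$ is a zero-dimensional Gorenstein local ring. The module $T$ acquires an $S_\q$-structure because each $(r,c) \in S \setminus \q$ has $r \in R \setminus \p$, acting on $T$ as $r$, which is invertible. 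Over a zero-dimensional Gorenstein ring every module is Gorenstein injective (e.g., via the bound $\gid_{S_\q}(-) \leq \id_{S_\q}(S_\q) = 0$ from~\cite[Theorem 2.29]{holm}), so $T$ is Gorenstein injective over $S_\q$, and I would fix a complete injective resolution $E$ over $S_\q$ with $T \cong \operatorname{Im}(E_0 \to E^0)$.

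The main obstacle is to check that the same complex $E$ is a complete injective resolution over $S$. Each $E^i$ (or $E_i$) should be $S$-injective, which follows from the adjunction isomorphism $\hom_S(-, E^i) \cong \hom_{S_\q}((-)_\q, E^i)$ together with exactness of localization and $S_\q$-injectivity of $E^i$. For the key vanishing condition, I would take any $S$-injective module $I$ and note that $I_\q$ is $S_\q$-injective (localization preserves injectivity over Noetherian rings), so $\hom_S(I, E) \cong \hom_{S_\q}(I_\q, E)$ is exact by the $S_\q$-completeness of $E$. This shows $E$ is a complete injective resolution over $S$, so $T$ is Gorenstein injective over $S$, and hence $C$-Gorenstein injective over $R$ by Fact~\ref{fact141223b}.
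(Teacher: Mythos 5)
Your proof is correct but takes a genuinely different route from the paper's. Both arguments begin with the same local observation: since $\p$ is minimal and $C_\p\cong D_\p$ is dualizing for the artinian ring $R_\p$, the localization $S_\q:=(R\ltimes C)_\q\cong R_\p\ltimes D_\p$ is a zero-dimensional Gorenstein local ring, so every $R_\p$-module is Gorenstein injective over it. The paper then proceeds through the Bass-class framework: it shows $T\in\catb{\hom_R(C,D)}{R}$ using Christensen's localization result for Bass classes, concludes $C\text{-}\gid_R(T)<\infty$ via Fact~\ref{fact141223c}, and then transports a truncated local complete $\ici$-resolution to $R$ by Hom-tensor adjointness, invoking Lemma~\ref{lem141223a}\eqref{lem141223a2} to force the dimension down to zero. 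You instead lift the complete injective resolution over $S_\q$ directly to one over $S=R\ltimes C$, using the adjunction $\hom_S(-,E^i)\cong\hom_{S_\q}((-)_\q,E^i)$ plus flatness of localization to see the terms are $S$-injective, and the fact that localization preserves injectivity to verify the test-exactness condition against $S$-injectives. Your approach is more self-contained, sidestepping the Bass-class machinery and Christensen's proposition entirely, at the cost of doing all the work at the level of the idealization rather than with $\ici$-resolutions over $R$ itself. A minor remark: the displayed definition of complete injective resolution in the Introduction reads ``$\hom_R(E,I)$ is exact,'' but the intended condition, consistent with Definition~\ref{defn150101a} in the case $C=R$, is that $\hom_R(I,E)$ is exact --- which is the condition your argument correctly checks.
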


\begin{proof}
Claim 1: The module $T$ is $D_{\p}$-Gorenstein injective over $R_{\p}$.
By definition, the Bass class $\catb{R_{\p}}{R_{\p}}=\catb{\hom_{R_{\p}}(D_{\p},D_{\p})}{R_{\p}}$ contains all $R$-modules, including $T$.
From Fact~\ref{fact141223c}, this implies that $T$ has finite $D_{\p}$-Gorenstein injective dimension over $R_{\p}$.
Since $R_{\p}$ is artinian, this implies that $D_{\p}\text{-}\gid_{R_{\p}}(T)\leq\dim(R)=0$ by Lemma~\ref{lem141223a}\eqref{lem141223a1}.
This establishes Claim 1.

Claim 2: One has $T\in\catb{\hom_R(C,D)}{R}$.
By definition again, we have $T\in\catb{R_{\p}}{R_{\p}}$.
From the isomorphism $C_{\p}\cong D_{\p}$, since $C_{\p}$ is semidualizing over $R_{\p}$,
there are isomorphisms
$$R_{\p}\cong\hom_{R_{\p}}(C_{\p},C_{\p})\cong\hom_{R_{\p}}(C_{\p},D_{\p})\cong\hom_{R}(C,D)_{\p}$$
so $T\in\catb{\hom_{R}(C,D)_{\p}}{R_{\p}}$.
Thus, we have $T\in\catb{\hom_R(C,D)}{R}$ by~\cite[Proposition 5.3(b)]{christensen:scatac}.
This establishes Claim 2.

Now we conclude our proof. 
Claim 2 implies that $T$ is in $\catb{\hom_R(C,D)}{R}$, so $C\text{-}\gid_R(T)<\infty$ by Fact~\ref{fact141223c}.
Because of the isomorphism $C_{\p}\cong D_{\p}$,
Claim 1 tells us that $T$ is $C_{\p}$-Gorenstein injective over $R_{\p}$.
Consider the left half of a complete $I_{C_{\p}}I$-resolution of $T$ over $R_{\p}$.
Truncating this yields an exact sequence
$$0\to V\to\hom_{R_{\p}}(C_{\p},I_{d-1})\to\cdots\to\hom_{R_{\p}}(C_{\p},I_0)\to T\to 0$$
where $d=\dim(R)$ and each module $I_j$ is injective over $R_{\p}$.
(If $d=0$, then we have $V=T$.)
Hom-tensor adjointness implies that this sequence has the following form:
$$0\to V\to\hom_{R}(C,I_{d-1})\to\cdots\to\hom_{R}(C,I_0)\to T\to 0.$$
Since $R_{\p}$ is flat over $R$ and $I_j$ is injective over $R_{\p}$, each $I_j$ is also injective over $R$.
Example~\ref{ex141223a} implies that each module $\hom_R(C,I_j)$ is $C$-Gorenstein injective over $R$.
We conclude that $T$ is $C$-Gorenstein injective over $R$, by
Lemma~\ref{lem141223a}\eqref{lem141223a2}.
\end{proof}

The next result shows that certain classes of $C$-Gorenstein injective $R$-modules are closed under tensor products in our setting. 
Recall that $C$ is \emph{generically dualizing} if the localization $C_{\p}$ is dualizing over $R_{\p}$ for each minimal prime $\p$ of $R$.

\begin{theorem}
\label{thm141218a}
Let $R$ be Cohen-Macaulay with a dualizing module $D$, and let $C$ be a semidualizing $R$-module. 
Assume that $C$ is generically dualizing.
If $G$ and $H$ are $C$-Gorenstein injective $R$-modules, then $G \otimes_R H$ is also $C$-Gorenstein injective.
\end{theorem}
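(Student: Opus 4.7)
The plan is to exploit the filtration of $H$ from Theorem~\ref{cor141225a} to reduce to the case of minimal primes, where the generic dualizing hypothesis activates Lemma~\ref{lem141218a}. The central technical input will be a $C$-Gorenstein injective analog of Lemma~\ref{20141002.2}, which forces all pieces of the filtration of positive height to tensor trivially against $G$.

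First I would establish the following variant of Lemma~\ref{20141002.2}: if $T$ satisfies $t(\p)$ with $\height(\p) \geq 1$ and $G$ is $C$-Gorenstein injective over $R$, then $G \otimes_R T = 0$. Since $R$ is Cohen-Macaulay, hence $(S_1)$, prime avoidance yields an $R$-regular element $r \in \p$. Setting $S := R \ltimes C$, one checks that $(r,0) \in S$ is $S$-regular: its height in $S$ equals $\height_R((r)) \geq 1$, and $S$ is Cohen-Macaulay (hence $(S_1)$) by the argument in the proof of Theorem~\ref{cor141225a}. By Fact~\ref{fact141223b}, $G$ is Gorenstein injective over $S$, so the proof of Lemma~\ref{20141002.2} applied over $S$ shows that multiplication by $(r,0)$ on $G$ is surjective; as an $R$-module operation this agrees with multiplication by $r$. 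The element-wise argument at the end of the proof of Lemma~\ref{20141002.2} then transfers verbatim to give $G \otimes_R T = 0$.

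Next I would apply Theorem~\ref{cor141225a} to $H$ to obtain the filtration $0 = H_{d+1} \subset H_d \subset \cdots \subset H_0 = H$ with $H_k/H_{k+1} \cong \bigoplus_{\height(\p)=k} H_{(\p)}$. By the previous step $G \otimes_R H_{(\p)} = 0$ whenever $\height(\p) \geq 1$, so $G \otimes_R(H_k/H_{k+1}) = 0$ for each $k \geq 1$. Right-exactness of $G \otimes_R -$ together with descending induction on $k$ (with base case $G \otimes_R H_{d+1} = 0$) then yields $G \otimes_R H_k = 0$ for all $k \geq 1$. Applying right-exactness once more to $0 \to H_1 \to H \to H/H_1 \to 0$ gives
$$G \otimes_R H \;\cong\; G \otimes_R (H/H_1) \;\cong\; \bigoplus_{\p \text{ minimal}} G \otimes_R H_{(\p)}.$$
For each minimal prime $\p$, property $t(\p)$ makes $H_{(\p)}$ an $R_\p$-module, so $G \otimes_R H_{(\p)} \cong G_\p \otimes_{R_\p} H_{(\p)}$ is also an $R_\p$-module. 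The generic dualizing hypothesis gives $C_\p$ dualizing over $R_\p$, and dualizing modules are unique over the artinian local ring $R_\p$, so $C_\p \cong D_\p$. Lemma~\ref{lem141218a} then shows that each $G_\p \otimes_{R_\p} H_{(\p)}$ is $C$-Gorenstein injective over $R$; closure of the class of $C$-Gorenstein injective modules under direct sums (transferred via Fact~\ref{fact141223b} from the corresponding property of Gorenstein injective modules over $S$, as in the proof of Proposition~\ref{20141010.3}) finishes the argument.

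The main obstacle is the first step, namely extending Lemma~\ref{20141002.2} to the $C$-Gorenstein injective setting. Once one passes to $S = R \ltimes C$ and uses that $R$-regular elements in $\p$ remain $S$-regular (via the Cohen-Macaulayness of $S$ and the resulting $(S_1)$ condition), the remainder of the argument is a mechanical consequence of the filtration of $H$ and the fact that the generic dualizing hypothesis reduces us to $R_\p$-modules at minimal primes, which are handled uniformly by Lemma~\ref{lem141218a}.
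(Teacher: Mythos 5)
Your proof is correct and follows essentially the same strategy as the paper: filter via Theorem~\ref{cor141225a}, kill the positive-height factors with a $C$-Gorenstein injective variant of Lemma~\ref{20141002.2}, and handle the minimal-prime factors via Lemma~\ref{lem141218a} together with closure under direct sums. You are somewhat more explicit than the paper's terse argument in justifying the needed $C$-variant of Lemma~\ref{20141002.2} by passing to $R\ltimes C$ (the paper simply cites Lemma~\ref{20141002.2}, which as stated treats only the case $C=R$), and this carefulness usefully fills in a detail the paper glosses over.
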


\begin{proof}
Using Lemma~\ref{lem141224c} and Theorem~\ref{cor141225a} as in the proof of~\cite[Theorem 4.1]{enochs}, 
we assume without loss of generality that there is a  prime $\p\in\spec(R)$ such that $G$ and $H$ both satisfy $t(\p)$.
Moreover, by Lemma~\ref{20141002.2}, we assume without loss of generality that $\p$ is a minimal prime of $R$.
It follows that $G$ and $H$ are $R_{\p}$-modules.
By assumption, we have $C_{\p}\cong D_{\p}$.
Thus, Lemma~\ref{lem141218a} implies that $G\otimes_RH$ is $C$-Gorenstein injective over $R$, as desired.
\end{proof}

Here is Theorem~\ref{20141114.2} from the introduction.

\begin{theorem}
\label{cor150430a}
Let $R$ be Cohen-Macaulay with a dualizing module $D$. 
Assume that $R$ is generically Gorenstein.
If $G$ and $H$ are Gorenstein injective $R$-modules, then $G \otimes_R H$ is also Gorenstein injective.
\end{theorem}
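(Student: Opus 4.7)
The plan is to derive Theorem~\ref{cor150430a} as an immediate specialization of the more general Theorem~\ref{thm141218a}, taking the semidualizing module to be $C = R$. By Fact~\ref{fact141223a}, the ring $R$ is always a semidualizing module over itself, and in the case $C = R$ the notion of $C$-Gorenstein injective reduces to that of ordinary Gorenstein injective (this is observed in Definition~\ref{defn150101a}). So the only remaining task is to check that the hypothesis of $C$ being generically dualizing translates, in the case $C = R$, exactly to the hypothesis that $R$ is generically Gorenstein.

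For this translation, I would argue as follows. A semidualizing module is dualizing precisely when it has finite injective dimension (Fact~\ref{fact141223a}), so taking $C = R$, the localization $R_\p$ is dualizing over $R_\p$ if and only if $R_\p$ has finite self-injective dimension, i.e., if and only if $R_\p$ is Gorenstein. Since $R$ is Cohen-Macaulay, each minimal prime $\p$ of $R$ satisfies $\dim(R_\p) = 0$, so finite Krull dimension of $R_\p$ is automatic. Thus the condition ``$C = R$ is generically dualizing'' is literally the same as ``$R_\p$ is Gorenstein for each minimal prime $\p$ of $R$'', which is the definition of generically Gorenstein.

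Having verified this equivalence, the conclusion follows immediately: applying Theorem~\ref{thm141218a} with $C = R$ to the pair of Gorenstein injective $R$-modules $G$ and $H$ gives that $G \otimes_R H$ is $C$-Gorenstein injective over $R$, which is to say Gorenstein injective. I do not anticipate any obstacle in this argument, since all the real work — the reduction (via Lemma~\ref{lem141224c} and the filtration of Theorem~\ref{cor141225a}) to the case that $G$ and $H$ both satisfy $t(\p)$ for a common minimal prime $\p$, followed by the application of Lemma~\ref{lem141218a} — is already carried out in the proof of Theorem~\ref{thm141218a}. The only subtlety is the semantic one of matching ``generically dualizing with respect to the semidualizing module $R$'' with ``generically Gorenstein'', which the preceding paragraph handles.
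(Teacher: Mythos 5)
Your proposal is correct and takes exactly the same route as the paper: Theorem~\ref{cor150430a} is stated there as the special case $C=R$ of Theorem~\ref{thm141218a}, which is precisely what you do. Your additional paragraph carefully unwinding why ``$R$ generically dualizing over itself'' coincides with ``$R$ generically Gorenstein'' is a correct and worthwhile verification that the paper leaves implicit.
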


\begin{proof}
This is the special case $C=R$ of Theorem~\ref{thm141218a}.
\end{proof}

\begin{rmk}\label{rmk141219a}
The preceding two results generalize~\cite[Theorem 4.1]{enochs}. However, our results are fundamentally different, as follows.
In~\cite[Theorem 4.1]{enochs}, the ring $R$ is Gorenstein, so every $R$-module has finite Gorenstein injective dimension. 
Hence, the point of~\cite[Theorem 4.1]{enochs} is not that the tensor product $G\otimes_RH$ has 
finite Gorenstein injective dimension, but that it has Gorenstein injective dimension 0.
On the other hand, in our setting, not every module has finite ($C$-)Gorenstein injective dimension. (See, e.g., the Example~\ref{ex141218a}.)
\end{rmk}

\begin{rmk}\label{rmk141231a}
It is natural to ask whether the results of this paper hold in a more general setting, e.g., if $R$ is only assumed to have a dualizing complex.
However, without some assumptions, the methods of proof in this paper break down quickly.
For instance, over the ring $R:=k[\![X,Y]\!]/(X^2,XY)$, the injective module $E:=E_R(k)$ satisfies $E\otimes_RE\cong k\neq 0$;
contrast this with Lemma~\ref{20141002.2}.
\end{rmk}

\bibliographystyle{amsplain}

\begin{thebibliography}{10}

\bibitem{aung}
P.~P. Aung, \emph{Gorenstein dimensions over some rings of the form {$R\oplus
  C$}}, J. Algebra Appl., to appear, \texttt{arxiv:1408.1123}.

\bibitem{bruns:cmr}
W.\ Bruns and J.\ Herzog, \emph{Cohen-{M}acaulay rings}, revised ed., Studies
  in Advanced Mathematics, vol.~39, University Press, Cambridge, 1998.
  \MR{1251956 (95h:13020)}

\bibitem{chen}
X.-W. Chen, \emph{Algebras with radical square zero are either self-injective
  or {CM}-free}, Proc. Amer. Math. Soc. \textbf{140} (2012), no.~1, 93--98.
  \MR{2833520 (2012e:16042)}

\bibitem{christensen:scatac}
L.~W. Christensen, \emph{Semi-dualizing complexes and their {A}uslander
  categories}, Trans. Amer. Math. Soc. \textbf{353} (2001), no.~5, 1839--1883.
  \MR{2002a:13017}

\bibitem{3danes}
L.~W. Christensen, A.~Frankild, and H.~Holm, \emph{On {G}orenstein projective,
  injective and flat dimensions---a functorial description with applications},
  J. Algebra \textbf{302} (2006), no.~1, 231--279. \MR{2236602 (2007h:13022)}

\bibitem{enochs}
E.~E. Enochs and Z.~Huang, \emph{Canonical filtrations of {G}orenstein
  injective modules}, Proc. Amer. Math. Soc. \textbf{139} (2011), no.~7,
  2415--2421. \MR{2784806 (2012b:13039)}

\bibitem{balance}
E.~E. Enochs and O.~M.~G. Jenda, \emph{Gorenstein balance of {H}om and tensor},
  Tsukuba J. Math. \textbf{19} (1995), no.~1, 1--13. \MR{1346750 (97a:16019)}

\bibitem{enochs:gipm}
\bysame, \emph{Gorenstein injective and projective modules}, Math. Z.
  \textbf{220} (1995), no.~4, 611--633. \MR{1363858 (97c:16011)}

\bibitem{rha}
\bysame, \emph{Relative homological algebra. {V}olume 1}, extended ed., de
  Gruyter Expositions in Mathematics, vol.~30, Walter de Gruyter GmbH \& Co.
  KG, Berlin, 2011. \MR{2857612 (2012g:16029)}

\bibitem{foxby:gmarm}
H.-B.\ Foxby, \emph{Gorenstein modules and related modules}, Math. Scand.
  \textbf{31} (1972), 267--284 (1973). \MR{48 \#6094}

\bibitem{foxby:bcfm}
\bysame, \emph{Bounded complexes of flat modules}, J. Pure Appl. Algebra
  \textbf{15} (1979), no.~2, 149--172. \MR{535182 (83c:13008)}

\bibitem{golod:gdagpi}
E.~S. Golod, \emph{{$G$}-dimension and generalized perfect ideals}, Trudy Mat.
  Inst. Steklov. \textbf{165} (1984), 62--66, Algebraic geometry and its
  applications. \MR{85m:13011}

\bibitem{hartshorne:lc}
R.\ Hartshorne, \emph{Local cohomology}, A seminar given by A. Grothendieck,
  Harvard University, Fall, vol. 1961, Springer-Verlag, Berlin, 1967.
  \MR{0224620 (37 \#219)}

\bibitem{holm}
H.~Holm, \emph{Gorenstein homological dimensions}, J. Pure Appl. Algebra
  \textbf{189} (2004), no.~1-3, 167--193. \MR{2038564 (2004k:16013)}

\bibitem{holm:smarghd}
H.\ Holm and P.\ J{\o}rgensen, \emph{Semi-dualizing modules and related
  {G}orenstein homological dimensions}, J. Pure Appl. Algebra \textbf{205}
  (2006), no.~2, 423--445. \MR{2203625}

\bibitem{holm:fear}
H.\ Holm and D.\ White, \emph{Foxby equivalence over associative rings}, J.
  Math. Kyoto Univ. \textbf{47} (2007), no.~4, 781--808. \MR{2413065}

\bibitem{jorgensen:prnsm}
D.~A. Jorgensen, G.~J. Leuschke, and S.\ Sather-Wagstaff, \emph{Presentations
  of rings with non-trivial semidualizing modules}, Collect. Math. \textbf{63}
  (2012), no.~2, 165--180. \MR{2909823}

\bibitem{MR2495252}
L.~Khatami, M.~Tousi, and S.~Yassemi, \emph{Finiteness of {G}orenstein
  injective dimension of modules}, Proc. Amer. Math. Soc. \textbf{137} (2009),
  no.~7, 2201--2207. \MR{2495252 (2009m:13021)}

\bibitem{matlis}
E.~Matlis, \emph{Injective modules over {N}oetherian rings}, Pacific J. Math.
  \textbf{8} (1958), 511--528. \MR{0099360 (20 \#5800)}

\bibitem{matsumura}
H.~Matsumura, \emph{Commutative ring theory}, second ed., Cambridge Studies in
  Advanced Mathematics, vol.~8, Cambridge University Press, Cambridge, 1989,
  Translated from the Japanese by M. Reid. \MR{1011461 (90i:13001)}

\bibitem{reiten:ctsgm}
I.~Reiten, \emph{The converse to a theorem of {S}harp on {G}orenstein modules},
  Proc. Amer. Math. Soc. \textbf{32} (1972), 417--420. \MR{0296067 (45 \#5128)}

\bibitem{MR3169700}
M.~Salimi, E.~Tavasoli, and S.~Yassemi, \emph{Tensor and torsion products of
  relative injective modules with respect to a semidualizing module}, Comm.
  Algebra \textbf{43} (2015), no.~6, 2632--2642.

\bibitem{sather:scc}
S.~Sather-Wagstaff and R.~Wicklein, \emph{Support and cofiniteness for
  complexes}, preprint (2014), \texttt{arXiv:math.AC/1401.6925}.

\bibitem{sharp:gmccmlr}
R.~Y. Sharp, \emph{On {G}orenstein modules over a complete {C}ohen-{M}acaulay
  local ring}, Quart. J. Math. Oxford Ser. (2) \textbf{22} (1971), 425--434.
  \MR{0289504 (44 \#6693)}

\bibitem{vasconcelos:dtmc}
W.~V. Vasconcelos, \emph{Divisor theory in module categories}, North-Holland
  Publishing Co., Amsterdam, 1974, North-Holland Mathematics Studies, No. 14,
  Notas de Matem\'atica No. 53. [Notes on Mathematics, No. 53]. \MR{0498530 (58
  \#16637)}

\end{thebibliography}
\providecommand{\bysame}{\leavevmode\hbox to3em{\hrulefill}\thinspace}
\providecommand{\MR}{\relax\ifhmode\unskip\space\fi MR }
\providecommand{\MRhref}[2]{%
  \href{http://www.ams.org/mathscinet-getitem?mr=#1}{#2}
}
\providecommand{\href}[2]{#2}

\end{document}